\newtheorem{theorem}{Theorem}
\newenvironment{conjecture}{{\bf Conjecture}}{\hspace*{1mm}}
\newtheorem{lemma}{Lemma}
\newtheorem{proposition}{Proposition}
\newtheorem{corollary}{Corollary}
\newtheorem{definition}{Definition}
\newtheorem{preproof}{{\bf Proof}}
\newenvironment{proof}[1]{\begin{preproof}{\rm #1}\hfill{\rule[-0.5mm]
                         {2mm}{2mm}}}{\end{preproof}}
\newtheorem{pretheorema}{{\bf Theorem}}
\newenvironment{theorema}[1]{\begin{pretheorema}
   {\hspace{-0.2em}{\rm #1}{\bf.}}}{\end{pretheorema}}
\newtheorem{prepropositiona}{{\bf Proposition}}
\title{{\bf Independence number  of  generalized Petersen graphs}}
\author{ Nazli Besharati \thanks{Department of Mathematical Sciences, University of Mazandaran,
Babolsar, I. R. Iran {\tt n.besharati@umz.ac.ir}, Corresponding author},
J. B. Ebrahimi\thanks{Ecole Polytechnique Federale de Lausanne
(EPFL), Station 14, CH-1015 Lausanne, Switzerland {\tt
javad.ebrahimi@epfl.ch}},
A. Azadi
\thanks{Department of Mathematical Sciences, Sharif University of
Technology, P. O. Box 11155-9415, Tehran, I. R. Iran {\tt aazadi@gmail.com}}}
\date{}
\begin{document}

\maketitle
\begin{abstract}
Determining the size of a maximum independent set of a graph $G$,
denoted by $\alpha(G)$, is an NP-hard problem. Therefore many
attempts are made to find upper and lower bounds, or exact values
of $\alpha (G)$ for special classes of graphs.

This paper is aimed toward studying this problem for the class of
generalized Petersen graphs. We find new  upper and lower  bounds
and  some exact values for $\alpha(P(n,k))$. With a  computer
program  we have  obtained exact values for each  $n<78$. In
\cite{MR2381433} it is conjectured that
the size of the minimum vertex cover, $\beta(P(n, k))$,
 is less than or equal to
$n + \lceil\frac{n}{5}\rceil $, for all $n$ and $k$ with $n>2k$. We prove this
conjecture  for some cases. In particular, we show that if $ n>
3k$, the conjecture is valid. We checked the conjecture with our
table for $n < 78$ and it had no inconsistency. Finally, we show
that for every fixed $k$, $\alpha(P(n, k))$ can be computed using
an algorithm with running time $O(n)$.
\end{abstract}

{\bf Keywords}: Generalized Petersen Graphs, Independent Set,
Tree Decomposition
\section {Introduction and preliminaries}

In a graph $G=(V,E)$, an {\sf independent set} $I(G)$ is a subset of the
vertices of $G$ such that no two vertices in $I(G)$ are adjacent.
The {\sf independence number} $\alpha (G)$ is the cardinality of a
largest set of independent vertices and an independent set of size $\alpha (G)$ is called an
$\alpha$-set.
The maximum independent set
problem is to find an independent set with the largest number of
vertices in a given graph. It is well-known that this problem is NP-hard \cite{MR519066}.
Therefore, many attempts are made to find  upper and lower bounds,
or exact values of $\alpha (G)$ for special classes of graphs.
This paper is aimed toward studying this problem for the generalized Petersen  graphs.

For each $n$ and $k$ \ $(n > 2k)$, a generalized Petersen graph
$P(n,k)$, is defined by vertex set $\{u_i, v_i\}$ and edge set
$\{u_iu_{i+1}, u_iv_i, v_iv_{i+k}\}$; where $i = 1, 2, \dots, n$
and subscripts are reduced modulo $n$. An induced subgraph on
$v$-{\sf vertices} is called the {\sf inner subgraph}, and
an induced subgraph on $u$-{\sf vertices} is called the {\sf outer cycle}.\\
In addition, we call two vertices $u_i$ and $v_i$ as twin of each other and the edge between them as a spoke.

In \cite{MR0038078}, Coxeter  introduced this class of graphs. Later
Watkins \cite{MR0236062} called these graphs  ``generalized Petersen graphs'', $P(n,k)$,
and conjectured that they admit a Tait coloring, except $P(5,2)$. This conjecture later
was proved in \cite{MR0304223}. Since 1969 this class of graphs
has been studied widely. Recently  vertex domination  and minimum vertex cover of $P(n,k)$
 have been studied.
For more details see for instance \cite{MR2381433}, \cite{MR2376483}, \cite{MR2521808} and \cite{MR2519173}.

A set $Q$ of vertices of a graph $G = (V,E)$ is called
a {\sf vertex cover} of $G$ if every edge of $G$ has at least one endpoint in $Q$. A vertex cover of a graph $G$
with the minimum possible cardinality is called a {\sf minimum vertex cover} of $G$ and its size is
denoted by $\beta(G)$.
In \cite{MR2381433} and \cite{MR2521808}
$\beta(P(n,k))$, has been studied. Since for every simple graph
$G$,   $\alpha(G) + \beta (G) = |V(G)|$ \cite{MR1367739}, their results imply the
following results for $\alpha(P(n,k))$, and  $n>2k$:
\begin{itemize}
\item [I)]
$\alpha(P(n,1)) = \left\{\begin{array}{lll}
 n   & & \hspace{0.5cm} n \hspace{0.5cm}is \hspace{0.5cm} even \\
 n-1 & & \hspace{0.5cm} n \hspace{0.5cm}is \hspace{0.5cm} odd.
 \end{array}\right.$
\item [II)]
For all $n > 4$, $\alpha(P(n,2))= \lfloor\frac{4n}{5}\rfloor$.
\item [III)]
For all $n > 6$,
$\alpha(P(n,3)) = \left\{\begin{array}{lll}
 n   & & \hspace{0.5cm}n \hspace{0.3cm}  is \hspace{0.3cm} even \\
 n-2 & & \hspace{0.5cm}n \hspace{0.3cm}  is \hspace{0.3cm} odd.
\end{array}\right.$
\item [IV)]
If both $n$ and $k$ are odd, then $\alpha (P(n,k))  \geq n- \frac{k+1}{2}$. \\
Also, if $ k \mid n$, then $\alpha(P(n,k)) = n- \frac{k+1}{2}$.
\item [V)]
$\alpha (P(n,k))$  $=n$ if and only if $n$ is even and $k$ is odd.
\item [VI)]
For all even $k$, we have
\begin{itemize}
\item
If $k-1\mid n$ then $\alpha(P(n, k))\geq n- \frac{n}{k-1}$.
\item
If $k-1 \nmid n$ then $\alpha(P(n, k))> n- \frac{n}{k-1}-2k$.
\end{itemize}
\item [VII)]
For all odd $n$, we have $\alpha(P(n, k)) \leq n -
\frac{d+1}{2}$, where $d= \gcd (n,k)$.
\end{itemize}
Recently, Fox et al.  proved the following results in
\cite{FoxRaluccaStanica}:
\begin{itemize}
\item[VIII)]
For all $n > 10$,
 $\alpha(P(n,5)) = \left\{\begin{array}{lll}
 n   & & \hspace{0.5cm} n \hspace{0.5cm}is \hspace{0.5cm} even \\
 n-3 & & \hspace{0.5cm} n \hspace{0.5cm}is \hspace{0.5cm} odd.
 \end{array}\right.$
\item[IX)]
For any integer $ k \geq 1$, we have that $\alpha(P(3k,k)) = \lceil{ \frac{5k-2}{2}}\rceil.$
\item[X)]
If $n,k$ are integers with $n$ odd and $k$ even, then
$\alpha (P(n,k))$ $\geq \frac{n-1}{2}+ \lfloor{\frac{\lceil{\frac{n}{k}}\rceil+1}{2}}\rfloor \cdot
\lfloor{\frac{n}{2d \lceil{\frac{n}{k}\rceil}}}\rfloor +\frac{d-1}{2}\cdot
\lfloor{\frac{1}{2}\big(\frac{n}{d} \ (mod \ \lceil{\frac{n}{k}}\rceil)\big)}\rfloor$,
where $d= \gcd (n,k)$.
\item[XI)]
If $n,k$ are even, then
$\alpha (P(n,k))$ $\geq \frac{n}{2}+ \frac{d}{2} \lfloor{\frac{n}{2d}}\rfloor$,  where $d= \gcd (n,k)$.
\end{itemize}
Notice that the problem of finding the size of a maximum
independent set in the graph $P(n,k)$ is trivial for even $n$ and
odd $k$, since $P(n,k)$ is a bipartite graph. For odd $n$ and $k$, $P(n,k)$
is not bipartite but we can remove a set of
 $k+1$ edges from $P(n,k)$ to obtain a bipartite graph. Thus in
 this case we have $ n-(k+1) \leq \alpha (P(n,k)) \leq n$.
 So, for odd $k$, we have upper and lower bounds for $\alpha (P(n,k))$ that are at most
$k+1$ away from $n$. In contrast, for even $k$, $P(n,k)$  has a lot
of odd cycles. In fact, the number of odd cycles in $P(n,k)$  is
at least as large as $O(n)$. This observation shows that for even
$k$, the graph $P(n,k)$ is far from being a bipartite graph and
as we see in continuation, we need more complicated arguments
for finding lower and upper bounds for $\alpha (P(n,k))$ compared to the case that $k$ is an odd number.

This paper is organized as follows. In Section $2$, we provide an
upper bound  for $\alpha (P(n,k))$ for  even $ k >2$. In Section $3$, we present some
lower bounds when   $k$ is even.
In both Section $2$ and Section $3$ we compare our   bounds  with
the  previously existing bounds. Some exact values for  $\alpha (P(n,k))$ are
given in Section $4$ by applying results presented in Sections $2$
and Section $3$. Finally, in Section
$5$ we prove  Behsaz-Hatami-Mahmoodian's conjecture  for some cases by using known
lower bounds. We checked the conjecture with our Table for $n < 78$, and it had no inconsistency.
\section{Upper bound}
In this section  we present an upper bound for $\alpha (P(n,k))$
when $k>2$ is even and we will show that the presented upper
bound is equal to $\alpha (P(n,k))$ in some cases. Our upper
bound is better than the upper bound given by Behsaz et.al. in
\cite{MR2381433}.\\
For $ t= 1,2, \dots,n$, we call the set
$\{u_t,u_{t+1},\dots,u_{t+2k-1}, v_t,v_{t+1},\dots,v_{t+2k-1}\}$ a $2k$-{\sf segment} and we denote it
by $I_t$. Let $G[I_t]$ be the subgraph of $P(n,k)$ induced by
$I_t$.

Let $\cal{S}$ be the set of all maximum independent sets of
$P(n,k)$. For every $S \in \cal{S}$ we denote by $f(S)$ the number of $2k$-segments $I_t$
for which $|I_t \cap S|= 2k \  (1 \leq t \leq n)$.\\
Define ${\cal{S}_{\min}} = \{ S \in {\cal{S}}| \forall S' \in {\cal{S}}, f(S) \leq f(S') \}$.
Since  $\cal{S}$ is nonempty,  ${\cal{S}_{\min}}$ is also  nonempty.
Let ${S_{0}}  \in \cal{S}_{\min}.$

\begin{proposition}
\label{Note}
For any $ S \in \cal{S}$, $ S \in \cal{S}_{\min}$ if and only if $f(S)=f(S_{0})$.
\end{proposition}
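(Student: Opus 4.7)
The plan is to treat this as essentially a direct unpacking of the definition of $\mathcal{S}_{\min}$, which is the set of maximum independent sets that minimize the integer-valued function $f$. Since $\mathcal{S}_{\min}$ is nonempty and $f$ takes nonnegative integer values on the finite set $\mathcal{S}$, the common value $f(S_0)$ is simply $\min_{S'\in\mathcal{S}} f(S')$. The proposition then says that membership in $\mathcal{S}_{\min}$ is equivalent to attaining this minimum, which is what one expects.

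For the forward direction, I would argue as follows. Assume $S \in \mathcal{S}_{\min}$. By the definition of $\mathcal{S}_{\min}$ applied to $S$, we have $f(S) \leq f(S')$ for every $S' \in \mathcal{S}$; in particular, taking $S' = S_0$ gives $f(S) \leq f(S_0)$. On the other hand, since $S_0 \in \mathcal{S}_{\min}$, the same defining property applied to $S_0$ (with $S' = S$) gives $f(S_0) \leq f(S)$. Combining the two inequalities yields $f(S) = f(S_0)$.

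For the backward direction, assume $f(S) = f(S_0)$. Since $S_0 \in \mathcal{S}_{\min}$, for every $S' \in \mathcal{S}$ we have $f(S_0) \leq f(S')$. Substituting $f(S_0) = f(S)$, this becomes $f(S) \leq f(S')$ for all $S' \in \mathcal{S}$, which is exactly the defining condition for $S \in \mathcal{S}_{\min}$.

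There is no real obstacle here; the only point requiring a line of comment is why $\mathcal{S}_{\min}$ is nonempty in the first place, but the authors have already noted this (as $\mathcal{S}$ is nonempty and $f$ is $\mathbb{Z}_{\geq 0}$-valued, the minimum is attained). The proposition is best presented as a two-line observation establishing that all elements of $\mathcal{S}_{\min}$ share a common value of $f$, namely $f(S_0)$; this is the fact that will actually be invoked later, when arguments about $\mathcal{S}_{\min}$ need to refer to ``the'' minimum value of $f$ without dependence on a particular representative.
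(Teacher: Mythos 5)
Your proof is correct; the paper states this proposition without proof, treating it as an immediate consequence of the definition of $\cal{S}_{\min}$, and your two-direction unpacking is exactly the intended (and essentially only) argument.
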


\begin{definition}
\label{type}
For any $S \in \cal{S}$, we say $I_t$ is of \  {\sf Type $1$} with respect to $S$
if $|I_t \cap S|= 2k$, of  \ {\sf Type $2$} with respect to $S$
if $|I_t \cap S|= 2k-1$, and of \  {\sf Type $3$} with respect to $S$ if $|I_t \cap S|\leq 2k-2$.\\
Let $T_{i}(S) = \{ I_t | I_t \  is  \ of \  Type \  i \  with  \ respect \  to \  S\}, \ for \  i =1,2,3$.
For a given $I_t \in T_2(S)$, we say $I_t$ is of
{\sf Special  type $2$} with respect to $S$,
if $u_t \notin S$ and $ \{u_t\} \cup (I_t \cap S)$ becomes an independent set for $G[I_t]$.
\end{definition}
Since $G[I_t]$ has a perfect matching of spokes
$\{u_tv_t,u_{t+1}v_{t+1},\dots,u_{2k+t-1}v_{2k+t-1}\}$, $|I_t \cap S|\leq 2k$.
So every $I_t$ is one  of the above Types.\\
Note that $f(S) = |T_1(S)|$.
\begin{lemma}
\label{Lemma type 1}
If $k$ is an even number then
$\alpha(G[I_t]) = 2k$ and $G[I_t]$ has a unique $\alpha$-set shown in Figure \ref{type 1}.\\
\begin{figure}[htb]
\begin{center}
\leavevmode
\includegraphics[width=0.8\textwidth]{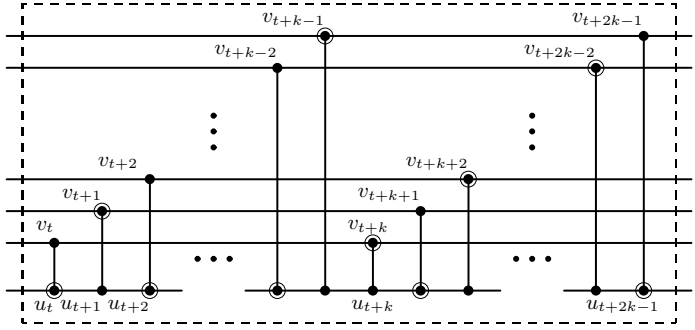}
\end{center}
\caption{$I_t$ as a Type $1$ segment.}
\label{type 1}
\end{figure}
\end{lemma}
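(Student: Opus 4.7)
The plan is to first establish $\alpha(G[I_t]) = 2k$ and then to prove uniqueness via a short parity argument. The $2k$ spokes $\{u_iv_i : t\le i\le t+2k-1\}$ form a perfect matching of $G[I_t]$, so I would conclude $\alpha(G[I_t])\le 2k$ immediately. To match this bound, I would exhibit the set in Figure~\ref{type 1}---taking $u_i$ at the indices $t,t+2,\dots,t+k-2$ and $t+k+1,t+k+3,\dots,t+2k-1$, and $v_i$ at the remaining indices---and verify that it is independent of size $2k$. This verification is routine, using that the inner edges inside $I_t$ are exactly $v_iv_{i+k}$ for $t\le i\le t+k-1$.

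For uniqueness, I would fix an arbitrary independent set $S$ of size $2k$ in $G[I_t]$. Since $S$ meets every spoke exactly once, the encoding $a_i\in\{0,1\}$ with $a_i=1\Leftrightarrow u_i\in S$ is well defined on $i\in\{t,\dots,t+2k-1\}$. The path edges on the $u$-side give $a_i+a_{i+1}\le 1$ for $t\le i\le t+2k-2$, while the inner edges give $a_i+a_{i+k}\ge 1$ for $t\le i\le t+k-1$. Summing the inner family yields $\sum_i a_i\ge k$ and summing the path family yields $\sum_i a_i\le k$; both chains of inequalities must therefore be tight everywhere. In particular $a_{i+k}=1-a_i$ for every $i$ in the first half of $I_t$.

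The decisive step exploits $k$ being even. Substituting $a_{i+k}=1-a_i$ and $a_{i+k+1}=1-a_{i+1}$ into the path constraint $a_{i+k}+a_{i+k+1}\le 1$ produces $a_i+a_{i+1}\ge 1$ for $t\le i\le t+k-2$, and combined with the path bound already at hand this forces $a_i+a_{i+1}=1$ across the first half. Hence $(a_i)_{i=t}^{t+k-1}$ alternates between $0$ and $1$. Because $k$ is even, $a_{t+k-1}=1-a_t$, while the spoke identity also gives $a_{t+k}=1-a_t$; the path edge $u_{t+k-1}u_{t+k}$ then requires $2(1-a_t)\le 1$, which forces $a_t=1$. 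All other entries are now determined, yielding precisely the pattern of Figure~\ref{type 1}. The main obstacle, and the place where care is really needed, is this last parity step: everything hinges on the twist $i\mapsto i+k$ preserving parity when $k$ is even, which is also why the analogous uniqueness statement fails for odd $k$.
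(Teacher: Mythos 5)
Your proof is correct, and it reaches the same two key facts the paper uses---that a $2k$-element independent set must pick exactly one endpoint of each spoke and exactly one endpoint of each inner edge $v_iv_{i+k}$---but it organizes the uniqueness argument quite differently. The paper runs a local forcing/propagation argument: it splits into the cases $v_t\in S$ and $u_t\in S$, chases the forced memberships vertex by vertex around the segment, and derives a contradiction in the first case (this is where evenness of $k$ enters for them, via the induction $v_{t+2l}\in S$). You instead encode the choice on each spoke by a $0$--$1$ variable $a_i$, extract the identity $a_{i+k}=1-a_i$ from a global counting/tightness argument, transfer the path constraints from the second half to the first to get strict alternation there, and then use the parity of $k$ once, at the junction edge $u_{t+k-1}u_{t+k}$, to pin down $a_t=1$. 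Your version is more systematic and makes the role of the hypothesis ``$k$ even'' more transparent (the shift $i\mapsto i+k$ preserves parity), at the cost of the slightly delicate summation bookkeeping; the paper's version is more elementary but longer and relies on the reader tracking a chain of forcings. One small inaccuracy to fix in your write-up: the assertion that \emph{both} families of inequalities are tight everywhere is false for the path family---in the unique optimal set one has $a_t=a_{t+2k-1}=1$, so the path sum is $2k-2$ and the junction term $a_{t+k-1}+a_{t+k}$ equals $0$, not $1$. This does not damage your argument, since you only use tightness of the inner family (which does follow, as each of its $k$ terms is at least $1$ and they sum to $\sum_i a_i=k$) and you re-derive the first-half equalities $a_i+a_{i+1}=1$ by substitution rather than by appealing to path tightness; but the sentence should be weakened to claim tightness only for the inner family.
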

\begin{proof}{
$G[I_t]$ has a perfect matching $\{u_iv_i |\  t\leq i \leq t+2k-1 \}$.
So $\alpha(G[I_t]) \leq  \frac{|V(G[I_t])|}{2} = 2k.$
On the other hand, Figure \ref{type 1} is an example of an independent set of $G[I_t]$ of size $2k$.
So $\alpha(G[I_t])= 2k$.\\
To show  that $G[I_t]$ has a unique  $\alpha$-set,
let $S$ be an $\alpha$-set of $(G[I_t])$.
Since $\alpha(G[I_t])= 2k$, $|S|= 2k $ and $S$ must contain precisely one vertex from each edge
$ \{u_i, v_i\}$ where $t \leq i \leq t+2k-1$.
Notice that the set of $u$-vertices of $(G[I_t])$ induces a path of length $2k$.
Therefore $|S \cap \{u_i | \ t \leq i \leq t+2k-1 \}| \leq k.$ The set of $v$-vertices
of $(G[I_t])$ induces a matching of
size $k$. This means that $|S \cap \{v_i | \  t \leq i \leq t+2k-1 \}| \leq k.$
These two observations show that any
$\alpha$-set $S$ of $G[I_t]$ has $k$ vertices from $u$-vertices and $k$ other vertices
from $v$-vertices of $V(G[I_t]).$
Moreover, every such $S$ contains precisely one vertex from each edge $ u_i v_i$
where $ t \leq i \leq t+2k-1$
and $v_i v_{i+k}$ where $ t \leq i \leq t+k-1$. Now, consider two cases:\\
Case 1: $v_t \in S$.\\
In this case, $u_t$ and $v_{t+k}$ are forced not to be in $S$. So $u_{t+k}$ is
forced to be in $S$. Then $u_{t+k-1}$
and $u_{t+k+1}$ are forced not to be in $S$ and this forces $v_{t+k-1}$ and $v_{t+k+1}$ to be in $S$.
Since $v_{t+k+1}$ is in $S$, $v_{t+1} \notin S$. Therefore $u_{t+1} \in S$, so $u_{t+2} \notin S$
and thus $v_{t+2} \in S.$
So, we showed that if $v_t \in S$ then $v_{t+2} \in S$ too. Now, if we repeat the same
argument for $v_{t+2}$ instead of
$v_t$, we can deduce that $v_{t+4} \in S$ and by a simple induction, it follows that $v_{t+2l} \in S$
for any $ 0 \leq l \leq \frac{k}{2} -1$. Particularly, $v_{t+k-2} \in S.$
Therefore $v_{t+2k-2} \notin S.$
This shows that $u_{t+2k-2} \in S$. Hence $u_{t+2k-1} \notin S$ and $v_{t+2k-1} \in S$.
So $v_{t+k-1} \notin S.$
But we already showed that $v_{t+k-1}$ is forced to be in $S$. This contradiction shows that there is
no Type $1$ $I_t$
for which $v_t \in S.$\\
Case 2: $u_t \in S$.\\
In this case, similar to the argument in  Case 1, each vertex is either forced to be in $S$ or it is forced not
to be in $S$. So, there is a unique pattern for $S \cap I_t$ when $I_t \in T_1(S)$.
Since the pattern shown in Figure \ref{type 1} is an instance of an independent set of size $2k$ for $G[I_t]$,
it is the unique pattern for such an independent set.}
~\end{proof}

Lemma \ref{Lemma type 1} guarantees that there is a unique pattern for $I_t \cap S$, if $I_t$
is of Special  type $2$ with respect to $S$.

\begin{lemma}
\label{Lemma uv}
For every $S \in \cal{S}$, if $I_t \in T_{1}(S)$
then $u_{2k+t},v_{2k+t}, u_{t-1}$, and $v_{t-1} \notin  S.$
Also, if $I_t $ is a  Special  type $2$ segment with respect
to $S$  then $u_{2k+t}$ and $v_{2k+t} \notin  S$.
\end{lemma}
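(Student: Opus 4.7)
The plan is to read off the claim directly from the uniqueness statement of Lemma \ref{Lemma type 1} combined with the adjacency structure of $P(n,k)$. All four excluded vertices are neighbors (in $P(n,k)$) of vertices that the unique $\alpha$-pattern of $G[I_t]$ forces into $S$, so the proof reduces to (i) identifying a few specific vertices of that pattern and (ii) listing the edges of $P(n,k)$ that connect them to the boundary vertices $u_{t-1}, v_{t-1}, u_{t+2k}, v_{t+2k}$.

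First, suppose $I_t \in T_1(S)$. Then $I_t \cap S$ is independent in $G[I_t]$ and has size $2k = \alpha(G[I_t])$, so by Lemma \ref{Lemma type 1} it coincides with the unique $\alpha$-set of $G[I_t]$ shown in Figure \ref{type 1}. From the explicit form of that pattern (equivalently, by the forcing chain used in Case~2 of Lemma \ref{Lemma type 1}, together with the natural reflection symmetry of $G[I_t]$ that swaps offsets $i$ and $2k{-}1{-}i$) we extract in particular the four memberships $u_t,\, u_{t+2k-1},\, v_{t+k-1},\, v_{t+k} \in S$. Now I invoke adjacencies of $P(n,k)$: $u_{t-1}u_t$ and $u_{t+2k-1}u_{t+2k}$ are edges of the outer cycle, while $v_{t-1}v_{t+k-1}$ and $v_{t+k}v_{t+2k}$ are edges of the inner subgraph (both arising from the rule $v_iv_{i+k}$). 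Since $S$ is independent in $P(n,k)$, none of $u_{t-1}, v_{t-1}, u_{t+2k}, v_{t+2k}$ can lie in $S$.

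For the Special Type~$2$ assertion, the definition gives $u_t \notin S$ and $\{u_t\}\cup(I_t\cap S)$ independent in $G[I_t]$. Because $|I_t\cap S|=2k-1$ and $u_t\notin I_t\cap S$, the augmented set has size $2k$, hence by Lemma \ref{Lemma type 1} is again precisely the unique $\alpha$-set of $G[I_t]$. In particular, $u_{t+2k-1}$ and $v_{t+k}$ (being distinct from $u_t$) still belong to $I_t\cap S \subseteq S$, and the same two edges $u_{t+2k-1}u_{t+2k}$ and $v_{t+k}v_{t+2k}$ yield $u_{t+2k},\, v_{t+2k}\notin S$.

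There is no real obstacle in this lemma: once Lemma \ref{Lemma type 1} is in hand, the entire argument is bookkeeping. The one point requiring care is justifying which four vertices of the unique pattern are used—namely the two boundary $u$-vertices $u_t, u_{t+2k-1}$ and the two middle $v$-vertices $v_{t+k-1}, v_{t+k}$—and matching them up with the correct edges of $P(n,k)$ pointing out of $I_t$. Note also that for a Special Type~$2$ segment one cannot hope to exclude $u_{t-1}$ (since $u_t$ itself is not in $S$), which is why the statement only lists the two "right-hand" boundary vertices in that case.
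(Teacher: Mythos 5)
Your proof is correct and follows essentially the same route as the paper: invoke the uniqueness of the $\alpha$-set of $G[I_t]$ from Lemma \ref{Lemma type 1}, read off that $u_t, v_{t+k-1}, v_{t+k}, u_{t+2k-1}$ lie in the pattern, and use the edges $u_{t-1}u_t$, $v_{t-1}v_{t+k-1}$, $u_{t+2k-1}u_{t+2k}$, $v_{t+k}v_{t+2k}$ together with the independence of $S$ in $P(n,k)$; the Special type $2$ case is handled exactly as the paper intends, by noting that $\{u_t\}\cup(I_t\cap S)$ must be that same unique pattern. Your write-up is actually more explicit than the paper's, which leaves the $u_{t-1},v_{t-1}$ case and the second part as ``similar.''
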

\begin{proof}{
If $I_t \in T_{1}(S)$ then $|I_t \cap S|= 2k$. So by  Lemma \ref{Lemma type 1},
there is a unique pattern for $I_t \cap S$.
Based on this pattern, $u_{2k+t-1}$ and $v_{k+t} \in  S$. Therefore $u_{2k+t}$ and $v_{2k+t} \notin  S$,
since $S$ is an independent set of vertices of $P(n,k)$. A similar argument shows that $u_{t-1}$ and $v_{t-1} \notin  S$. The proof of the second part of the lemma is similar.}
\end{proof}
\begin{corollary}
\label{corollary 1}
If $I_t \in T_{1}(S)$ then $I_{t+1} , I_{t+2},\dots, I_{t+2k} \notin  T_{1}(S).$
\end{corollary}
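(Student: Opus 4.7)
The plan is to derive an immediate contradiction from Lemma \ref{Lemma uv} combined with the ``one vertex per spoke'' consequence of Lemma \ref{Lemma type 1}. First, I would record the key fact from Lemma \ref{Lemma uv}: since $I_t \in T_1(S)$, both $u_{t+2k}$ and $v_{t+2k}$ lie outside $S$. This is the single ingredient about $S$ itself that I will use.

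Next, I would argue that for every $j$ with $1 \le j \le 2k$, the ``forbidden'' pair $\{u_{t+2k},v_{t+2k}\}$ is in fact contained in $I_{t+j}$. Indeed $I_{t+j} = \{u_{t+j},\ldots,u_{t+j+2k-1},\, v_{t+j},\ldots,v_{t+j+2k-1}\}$, and the condition $t+j \le t+2k \le t+j+2k-1$ is equivalent to $1 \le j \le 2k$, which is precisely our range. So the spoke $u_{t+2k}v_{t+2k}$ is one of the $2k$ spokes of $G[I_{t+j}]$.

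Now I would suppose, for contradiction, that some $I_{t+j}$ with $1 \le j \le 2k$ also belongs to $T_1(S)$. Then $|I_{t+j}\cap S| = 2k$, and since the $2k$ spokes of $G[I_{t+j}]$ form a perfect matching, the only way to attain this bound is for $S$ to contain exactly one endpoint of every spoke of $G[I_{t+j}]$ (this is exactly the opening observation in the proof of Lemma \ref{Lemma type 1}). Applying this to the spoke $u_{t+2k}v_{t+2k}$ forces at least one of $u_{t+2k}, v_{t+2k}$ into $S$, contradicting the first step.

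There is no real obstacle here: the argument is a direct combination of Lemma \ref{Lemma uv} (which rules out both ``continuation vertices'') and the matching bound used in Lemma \ref{Lemma type 1} (which demands one of them). The only thing to be careful about is verifying the index range so that both $u_{t+2k}$ and $v_{t+2k}$ actually sit inside $I_{t+j}$ for every $j\in\{1,\ldots,2k\}$; once this combinatorial check is made, the contradiction is immediate and the corollary follows.
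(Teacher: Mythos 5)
Your proposal is correct and follows exactly the paper's own argument: Lemma \ref{Lemma uv} gives $u_{t+2k},v_{t+2k}\notin S$, the index check places the spoke $u_{t+2k}v_{t+2k}$ inside each $G[I_{t+j}]$ for $1\le j\le 2k$, and the perfect-matching count forces any Type $1$ segment to contain one endpoint of every one of its spokes. No differences worth noting.
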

\begin{proof}{
Notice that if $I_r \in  T_{1}(S)$ then for any edge $u_{i}v_{i} \in E(G[I_r])$ either
$u_i \in S$ or $v_i \in S$.
Since $I_t \in T_{1}(S)$, Lemma \ref{Lemma uv} implies that $u_{2k+t}$ and $v_{2k+t} \notin S$.
On the other hand,
 $u_{2k+t}v_{2k+t} \in E(G[I_{t+i}])$ for $ i = 1,2,\cdots,2k$.
 Thus $I_{t+1} , I_{t+2},\cdots,I_{t+2k} \notin  T_{1}(S).$}
\end{proof}
\begin{theorem}
\label{maintheorem}
$\alpha(P(n,k)) \leq \lfloor{\frac{(2k-1)n}{2k}}\rfloor$ for any even number
$k>2$ and any integer $ n > 2k$.
\end{theorem}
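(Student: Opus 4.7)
My plan is the following three-step argument.

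\textbf{Step 1: double counting.} Since each vertex of $P(n,k)$ lies in exactly $2k$ of the $n$ segments $I_1,\ldots,I_n$, we have $\sum_{t}|I_t\cap S|=2k|S|$. Let $a$, $b$, $c$ denote the numbers of segments of Types $1$, $2$, $3$ with respect to $S$, so that $a+b+c=n$. By definition $|I_t\cap S|$ is at most $2k$, $2k-1$, $2k-2$ in the three cases, giving $2k|S|\le 2kn-b-2c$. Because $\lfloor(2k-1)n/(2k)\rfloor=n-\lceil n/(2k)\rceil$, the theorem reduces to proving $b+2c\ge 2k\lceil n/(2k)\rceil$, equivalently $c-a\ge 2k\lceil n/(2k)\rceil-n$.

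\textbf{Step 2: gap analysis.} I would pass to $S\in\mathcal{S}_{\min}$ and call an index $i$ an \emph{empty spoke} when $u_i,v_i\notin S$. Writing $m$ for the number of empty spokes, $|S|=n-m$, so the goal becomes $m\ge\lceil n/(2k)\rceil$. In the cyclic sequence of empty spokes, any gap longer than $2k$ would expose $2k$ consecutive filled spokes forming a Type $1$ segment $I_t$, and Lemma \ref{Lemma uv} then forces the spokes $t-1$ and $t+2k$ to be empty, so every gap has length at most $2k+1$. Moreover, two Type $1$ segments at cyclic distance exactly $2k+1$ are impossible: the unique Case~$2$ pattern of Lemma \ref{Lemma type 1} would place both $v_{t+k+2}$ and $v_{t+2k+2}$ in $S$, contradicting that they are joined by an inner edge. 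Hence gaps of length $2k+1$ cannot be cyclically adjacent, strengthening Corollary \ref{corollary 1}.

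\textbf{Step 3: the exchange argument.} When no gap equals $2k+1$ the averaging $n=\sum g_i\le 2km$ already gives $m\ge\lceil n/(2k)\rceil$. The remaining case is $a>0$, where the naive count only yields $m\ge(n-a)/(2k)$. To close the gap I would combine the minimality of $f(S)$ with the Special Type $2$ notion: the second clause of Lemma \ref{Lemma uv} says a Special Type $2$ segment also forces the spoke $t+2k$ to be empty, so a local swap that converts a Type $1$ segment into a Special Type $2$ one while preserving $|S|$ -- for instance, replacing $u_t$ by $u_{t-1}$ when $u_{t-2}\notin S$ -- strictly decreases $f(S)$ and contradicts $S\in\mathcal{S}_{\min}$. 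Proving that the obstructions to such swaps yield enough extra empty spokes around every Type $1$ segment to upgrade the count to $m\ge\lceil n/(2k)\rceil$ is the principal obstacle; the remainder is bookkeeping using the uniqueness of the pattern in Lemma \ref{Lemma type 1}.
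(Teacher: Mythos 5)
Your Steps 1 and 2 are sound and essentially reproduce the paper's setup: the double count $\sum_t|I_t\cap S|=2k|S|$, the reformulation $|S|=n-m$ in terms of empty spokes (so the theorem is exactly $m\ge\lceil n/(2k)\rceil$), the fact that every gap between consecutive empty spokes has length at most $2k+1$ (via Lemma \ref{Lemma uv}), and the observation that two Type $1$ segments at cyclic distance exactly $2k+1$ would force the adjacent pair $v_{t+k+2},v_{t+2k+2}$ into $S$ --- the paper uses this very fact when it rules out $v_{2k+3}\in S_0$. But the theorem is not yet proved at that point: even with no two cyclically adjacent gaps of length $2k+1$, the averaging only gives $n\le 2km+\lceil m/2\rceil$, which is strictly weaker than $m\ge\lceil n/(2k)\rceil$.

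The genuine gap is Step 3, which you yourself label ``the principal obstacle.'' Two concrete problems. First, the proposed swap $u_t\mapsto u_{t-1}$ is only legal when $u_{t-2}\notin S$, and even then it need not strictly decrease $f(S)$: adding $u_{t-1}$ can turn $I_{t-2k}$ into a \emph{new} Type $1$ segment (every segment $I_{t-1},\dots,I_{t-2k+1}$ still contains the newly emptied spoke $t$, but $I_{t-2k}$ does not), so you cannot conclude that a set in $\mathcal{S}_{\min}$ has no Type $1$ segments --- and that conclusion would be false in general, which is why the paper's Case $2$ explicitly treats $f(S_0)>0$. Second, the quantitative claim that the ``obstructions to such swaps yield enough extra empty spokes'' is precisely the content of the theorem and is left unproven. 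What the paper actually does, and what is absent from your outline, is a \emph{global} propagation: assuming some Type $1$ segment is followed by another Type $1$ segment with no Type $3$ segment in between, it builds a chain $S_0,S_1,\dots,S_m\in\mathcal{S}_{\min}$ via the exchange $S_i=(S_{i-1}\setminus\{u_{2ik}\})\cup\{u_{2ik+1}\}$ (showing $f(S_i)=f(S_{i-1})$, not a strict decrease), pushes the Type $1$ pattern forward by $2k$ at each step all the way around the cycle, and derives a contradiction at the wrap-around by a three-way case analysis on $2k(m+1)+1\bmod n$. This establishes that every Type $1$ segment is followed by a Type $3$ segment before the next Type $1$ segment, hence $|T_1(S_0)|\le|T_3(S_0)|$, which is all the counting step needs. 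Your local-swap heuristic does not substitute for that global argument.
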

\begin{proof}{
Let $S_0 \in \cal{S}_{\min}$. We consider two cases.\\
Case $1$: $f(S_0) =0$.\\
In this case $T_1(S_0)= \emptyset$. So $|I_t \cap S_0|\leq 2k-1$
for any $ 1 \leq t \leq n$. If we add all of these $n$ inequalities we get:\\
\begin{equation}
\label{e: 1}
\sum_{t=1}^{n} |I_{t} \cap S_{0}|\leq (2k-1)n.
\end{equation}
On the other hand $\sum_{t=1}^{n} |I_{t} \cap S_{0}|= 2k|S_0|$,
since every element of $S_0$ is contained in precisely $2k$ of the sets $I_t$.
Thus:
$$2k|S_0| \leq (2k-1)n  \Longrightarrow \alpha(P(n,k)) = |S_0| \leq \frac{2k-1}{2k}n.$$
Case $2$: $f(S_0) > 0.$\\
In this case $T_1(S_0)\neq \emptyset $.
Similar to the inequality \ref{e: 1} we have:\\
$$2k|S_0|= \sum_{t=1}^{n} |I_{t} \cap S_{0}|\leq (2k-1)n +|T_{1}(S_0)| - |T_{3}(S_0)|.$$
So, to prove the theorem, it suffices to show that there exists $ S_0 \in \cal{S}_{\min}$ such that
$|T_{1}(S_0)|\leq |T_{3}(S_0)|$.\\
If we can show that for any $I_r \in T_{1}(S_0)$, there exists an
$I_{r'} \in T_{3}(S_0)$  so that $ I_{r+1},I_{r+2},\dots, I_{r'} \notin  T_{1}(S_0)$, then it follows that $|T_{1}(S_0)|\leq |T_{3}(S_0)|$.\\

On the  contrary, suppose that there exists $ I_{t} \in T_{1}(S_0)$ in such a way that in the sequence
$I_{t+1},I_{t+2},\dots$ before we see an element of $T_{3}(S_0)$, we see an element of $T_{1}(S_0)$.
Without loss of generality we can assume that $t=1.$ By  Lemma \ref{Lemma type 1},
 $(I_{1} \cap S_0)$ is of the form depicted in Figure \ref{type 11}.\\
\begin{figure}[htb]
\begin{center}
\leavevmode
\includegraphics[width=0.8\textwidth]{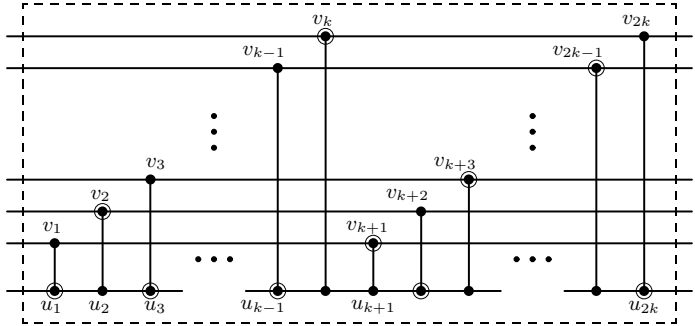}
\end{center}
\caption{$I_1$ as a Type $1$ segment.}
\label{type 11}
\end{figure}
Since $I_{1} \in T_{1}(S_0)$, by Corollary \ref{corollary 1},
$I_2,I_3, \dots, I_{2k+1} \notin  T_{1}(S_0).$
Based  on our assumption, $I_2,I_3, \dots, I_{2k+1} \in T_{2}(S_0).$
In particular,
$I_{2k+1} \in T_{2}(S_0)$.
Since $I_1 \in T_1(S_0)$, by Lemma \ref{Lemma uv} we have
$u_{2k+1}, v_{2k+1} \notin S_0$. On the other hand, we know that  $S_0$ must have
one vertex from each edge $u_iv_i$ where $2k+2 \leq i  \leq 4k.$
Since $2k+2 \leq 2k+3 \leq 4k$, either $u_{2k+3}$ or $v_{2k+3}
\in S_0$. But notice that $v_{2k+3}$ is adjacent to $v_{k+3}$
which is in $S_0$, for $k > 2$. Thus,  $v_{2k+3} \notin S_0$ and
$u_{2k+3}$ must be in $S_0$. This means that $u_{2k+2} \notin
S_0$. Now, define $S_1:= (S_0 \setminus \{u_{2k}\})\cup
\{u_{2k+1}\}$. One can easily see that $S_1 \in \cal{S}$.
Based on the choice of $S_0 \in \cal{S}_{\min}$, $f(S_0) \leq f(S_1)$.
Therefore, there must be an index
$2 \leq r  \leq n$ so that $I_r \in T_1(S_1)\setminus T_1(S_0)$.
Since $S_0$ and $S_1$ agree on every element except
$u_{2k}$ and $u_{2k+1}$, the only candidate for $r$ is $r= 2k+1$.
So $I_{2k+1} \in T_1(S_1)$ and $I_{2k+1} \notin T_1(S_0)$. Moreover
$I_{1} \in T_1(S_0)$ and $I_{1} \notin T_1(S_1)$. Thus $f(S_0) = f(S_1)$.
By Proposition \ref{Note}, $S_1 \in \cal{S}_{\min}$.
Notice that if any of $I_{2k+2},I_{2k+3},\dots,I_{n}$ are of Type $i$
with respect to $S_1$, they are of Type $i$ with respect to $S_0$, as well. So, in the sequence
$I_{2k+2},I_{2k+3},\dots,I_{n}$, any Type $3$ segment with respect to $S_1$
appears after an element of Type $1$ with respect to $S_1$.
 Since $I_{2k+1} \in T_1(S_1)$ by Corollary \ref{corollary 1},
$I_{2k+2},I_{2k+3},\dots,I_{4k+1} \notin T_1(S_1)$.
Then from our assumption $I_{2k+2},I_{2k+3},\dots,I_{4k+1} \in T_2(S_1)$.\\

This means that the same argument can be applied to $S_1$ and
if we define $S_2 := (S_1 \setminus \{u_{4k}\})\cup \{u_{4k+1}\}$, then
$S_2\in \cal{S}_{\min}$. If we consecutively repeat this argument for $S_1,S_2,S_3,\dots,S_m$ where $m=\lfloor{\frac{n}{2k}}\rfloor $ and
$S_i := (S_{i-1} \setminus \{u_{2ik}\}) \cup \{u_{2ki+1}\}$, then we  observe that
$S_i\in \cal{S}_{\min}$ and  $I_{2ki+1} \in T_1(S_i)$ for $ \ i= 1,2,\dots,m$,
and none of $I_2, I_3, \dots,I_{2k(m+1)}, I_{2k(m+1)+1}$
are of Type~$1$ with  respect to $S_0$. Also, $I_{2k(i+1)+1}$ for
$ \ i= 0,1,\dots,m$ are of Special  type~$2$ with  respect to $S_i$. Since
 $S_i$ and $S_0$ agree on the $I_{2ki+2}, I_{2ki+3}, \dots, I_{n}$, then $I_{2k(i+1)+1}$ for
$ \ i= 0,1,\dots,m$ are of Special  type~$2$ with  respect to $S_0$.\\
 In  other words, if $I_1$ belongs to $T_1(S_0)$ and the next element of $T_1(S_0)$
 appears before the first element of
$T_3(S_0)$ in the sequence $I_2,I_3,I_4,\dots,$ then all of $I_{2k+1},I_{4k+1},\dots,I_{2km+1}, I_{2k(m+1)+1}$
are Special type $2$ with respect to $S_0$.
In particular, $I_{2km+1}$ is of Special  type $2$ with  respect to $S_0$. \\
As $I_{2km+1} \in T_{1}(S_m)$, by Lemma \ref{Lemma uv},
$u_{2k(m+1)+1},v_{2k(m+1)+1}\notin S_m$ 
and since $S_m$ and $S_0$ agree on the
$I_{2km+2}, I_{2km+3}, \dots, I_{n}$ we conclude that
$u_{2k(m+1)+1}, v_{2k(m+1)+1}\notin S_0$.\\
Now consider three cases:
\begin{itemize}
\item{$ 2k(m+1)+1 \equiv 1\pmod{n}$:}\\
Since $I_{2km+1}$ is of Special  type $2$ with respect to $S_0$, by Lemma \ref{Lemma uv}, we have
$u_{2km+1+2k} = u_1 \notin S_0$. This is a contradiction as we assumed $I_1 \in T_1(S_0)$ and therefore
$u_1 \in S_0$.

\item{$ 2k(m+1)+1 \not \equiv 0,1\pmod{n}$:}\\
Since $I_1 \in T_1(S_0)$, by Lemma \ref{Lemma uv}, $u_n,v_n \notin S_0$. Also we know that $u_{2k(m+1)+1},v_{2k(m+1)+1}\notin S_0$.
Thus, $I_{2k(m+1)+1}$ is of Type $3$ with  respect to $S_0$
and none of $I_2,I_3,\dots,I_{2k(m+1)}$ are of Type $1$ with respect to $S_0$ which is a contradiction.
\item{$ 2k(m+1)+1  \equiv 0\pmod{n} $:}\\
$I_1$ is of Type $1$ with  respect to $S_0$ and for every $1\leq i \leq m $, $I_{2k(i+1)+1}$ is
of Special  type $2$ with  respect to $S_0$.
In particular, $I_{2km+1}$ is of Special  type $2$ with  respect to
$S_0$, and therefore
$v_{2km+k+2}=v_{2k(m+1)-k+3} \in S_0$, (See Figure~\ref{type 2}).
On the other hand,
$v_2 \in S_0$ as $I_1 \in T_1(S_0)$, and since $n=2k(m+1)+1$, $v_2$ is adjacent to $v_{2k(m+1)-k+3}$.
This is a contradiction.
\end{itemize}

So in all the cases, we get a contradiction  which means, after any Type $1$ segment
$I_r$, a Type $3$ segment $I_{r'}$ will appear before we see another Type $1$ segment. This means that
$|T_1(S_0)| \leq |T_3(S_0)|$ and the  theorem follows, as we argued earlier.}
\end{proof}
\section{Lower bounds}
In this section we introduce some lower bounds for $\alpha (P(n,k))$ where $k$ is even and $k > 2$.\\
Here we explain a construction for an independent set in $P(n,k)$ for even numbers  $n$ and $k$.
It happens that for every even $ n < 78$, our lower bound is equal to the actual value, using a computer
program for finding the independence number in $P(n,k)$.

\begin{theorem}
\label{nk even}
 If  $n$  and $k$ are even and $k>2$ then:
\begin{displaymath}
\alpha(P(n,k)) \geq  (2k-1)\lfloor{\frac{n}{2k}}\rfloor + \left\{ \begin{array}{ll}
\frac{r}{2}          &  \hspace{.5cm}\textrm{if $r \leq k$},\\  \\
\frac{3r}{2}-k-1     &  \hspace{.5cm}\textrm{if $r >k$}.
\end{array} \right.
\end{displaymath}
 where  $r$ is the remainder of $n$ modulo  $2k$.
\end{theorem}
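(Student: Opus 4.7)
The plan is to construct an explicit independent set $S \subseteq V(P(n,k))$ of the stated size; because the right-hand side is a lower bound on $\alpha(P(n,k))$, no optimality argument is needed and everything reduces to producing the set and checking independence. Let $q = \lfloor n/(2k)\rfloor$ and $r = n - 2kq$, and recall from Lemma~\ref{Lemma type 1} that each $2k$-segment $I_t$ has a unique maximum independent set of size $2k$ (the Type~1 pattern of Figure~\ref{type 1}), which contains $u_t$; removing $u_t$ from this pattern yields a Special Type~$2$ configuration of size $2k-1$.

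The first step is to place Special Type~$2$ patterns on the $q$ consecutive $2k$-segments $I_1, I_{2k+1}, I_{4k+1}, \dots, I_{2k(q-1)+1}$, contributing exactly $(2k-1)q$ vertices. Compatibility between adjacent Special Type~$2$ blocks is free: by the second part of Lemma~\ref{Lemma uv}, a Special Type~$2$ segment $I_t$ forces $u_{2k+t}, v_{2k+t} \notin S$, which matches the fact that the next Special Type~$2$ block $I_{2k+t}$ also excludes $u_{2k+t}$ (by definition) and excludes $v_{2k+t}$ (because the Type~$1$ pattern never puts $v$ in the first column). The longer-range $v_iv_{i+k}$ adjacencies between consecutive blocks are automatically respected because the Type~$1$ pattern placed on $I_{t+2k}$ is just the shift by $2k$ of the pattern on $I_t$, and $k$-apart $v$-vertices land in symmetric positions whose Type~$1$ membership alternates correctly.

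The second step is to add extra vertices in the leftover arc of length $r$, i.e.\ indices $2kq+1, \dots, n$. For $r \le k$ I would insert every other $v$-vertex in the leftover arc, adjusting the parity of the chosen indices so that (a) the $v_iv_{i+k}$ edges from the leftover to the last Special Type~$2$ block $I_{2k(q-1)+1}$ avoid its $S$-vertices, and (b) the wraparound $v_iv_{i-k}$ edges from the leftover to the first block $I_1$ avoid its $S$-vertices. Because $r\le k$, the chosen leftover $v$-vertices have no inner-cycle neighbours among themselves, so only (a) and (b) need to be verified and each is a direct comparison with the explicit Type~$1$ pattern; this yields $r/2$ new vertices. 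For $r > k$ a denser construction is possible: we can essentially install a truncated Special Type~$2$ pattern on the leftover, which on the portion $\{2kq+1, \dots, 2kq+k\}$ already contributes $k/2 - 1$ vertices from the ``middle'' of the pattern, and the remaining $r - k$ columns each contribute $3/2$ vertices on average once both $u$- and $v$-layers can be used without internal $v_iv_{i+k}$ collisions. Summing gives $(k/2 - 1) + \tfrac{3}{2}(r - k) = \tfrac{3r}{2} - k - 1$.

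The main obstacle is the wraparound verification in the $r > k$ case: the denser leftover pattern reaches across to interact with the first block $I_1$ both through $u_nu_1$ and through the inner edges $v_iv_{i+k}$ for $i$ near $n$. The proof will have to pin down exactly which shift of the Special Type~$2$ pattern is used on the leftover, and then check, segment by segment, that each of these residual adjacencies is benign; this is a finite case analysis driven by the residue of $r$ modulo small numbers. Once the construction is laid out, independence is local and immediate, and a straightforward count gives the claimed lower bound.
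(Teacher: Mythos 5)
Your overall strategy --- tile $q=\lfloor n/2k\rfloor$ consecutive $2k$-segments with the Special Type~$2$ pattern and then augment inside the leftover $r$-segment --- is exactly the paper's, and your check that adjacent Special Type~$2$ blocks are mutually compatible is correct. The gap is in the leftover. For $r\le k$ you propose adding every other $v$-vertex of the leftover arc, ``adjusting the parity'' so that both the inner edges into the last block and the wraparound inner edges into the first block are avoided. This cannot be done. Write the leftover indices as $2kq+s$ with $1\le s\le r$. The (unique) Type~$1$ pattern places the $v$-vertices of a block starting at $t$ at offsets $1,3,\dots,k-1$ and $k,k+2,\dots,2k-2$; a short computation using that $k$ and $r$ are both even shows that $v_{2kq+s-k}$ (which lies in the last block, at offset $k+s-1$) belongs to $S_0$ exactly when $s$ is odd, while $v_{2kq+s+k}=v_{s+k-r}$ (which lies in the first block, at offset $s+k-r-1$) belongs to $S_0$ exactly when $s$ is even. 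Hence \emph{every} leftover $v$-vertex has an inner neighbour already in the tiling, and not a single one can be added: in $P(10,4)$, for instance, $v_9$ is adjacent to $v_5\in S_0$ and $v_{10}$ to $v_4\in S_0$. The paper sidesteps this entirely by adding the leftover $u$-vertices $u_2,u_4,\dots,u_r$ instead, whose only neighbour outside the leftover is the initial $u$-vertex of the adjacent block --- precisely the vertex the Special Type~$2$ pattern omits.

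For $r>k$ your proposal is essentially the accounting identity $(k/2-1)+\tfrac32(r-k)=\tfrac{3r}{2}-k-1$ together with the assertion that some ``truncated Special Type~$2$ pattern'' plus a finite case analysis will realize it; no vertex set is exhibited and you yourself defer the wraparound verification. Since the whole content of this theorem is the explicit construction, that is the part that still has to be supplied (the paper's set for this case is a union of five explicit arithmetic progressions of $u$- and $v$-indices inside the leftover segment, whose independence is then checked directly).
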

\begin{proof}
{We partition the vertices of $P(n,k)$ into $\lfloor \frac{n}{2k}\rfloor$ $2k$-segments and one $r$-segment.
Since $n$ and $k$ are even numbers, $r$ is also an even number and it is straightforward to see that if we choose
a subset of the form shown in Figure \ref{type 2}, from each $2k$-segment they form an independent set $S_0$ of size
$(2k-1)\lfloor{\frac{n}{2k}}\rfloor$.

Then we try to extend this independent set by adding more vertices from the remaining
$r$-segment. Without loss of generality, we may assume that the $r$-segment consists of the vertices
$\{u_1, u_2, \dots, u_r,v_1,v_2, \dots, v_r\}$. consider two cases:
\begin{itemize}
\item { $r \leq k$ :}\\
In this case the set $S_0 \cup \{u_2, u_4, \dots, u_{r-2},u_r\}$ is an independent set of size
$(2k-1)\lfloor{\frac{n}{2k}}\rfloor + \frac{r}{2}$.
\item{ $r > k$ :}\\
In this case the set:\\
$ S_0 \cup \{u_3,u_5,\dots,u_{r-k-3}, u_{r-k-1}\}\  \cup
\{v_2,v_4,\dots,v_{r-k-2}, v_{r-k}\}\  \cup \\
\{u_{r-k+1},u_{r-k+3},\dots,u_{k-3}, u_{k-1}\} \
\cup \{u_{k+2},u_{k+4},\dots,u_{r-2}, u_{r}\}  \cup \\
\{v_{k+1},v_{k+3},\dots,v_{r-3}, v_{r-1}\}$
is an independent set of size
$(2k-1)\lfloor{\frac{n}{2k}}\rfloor + \frac{r-k-2}{2} + \frac{r-k}{2} + \frac{2k-r}{2}
+ \frac{r-k}{2} + \frac{r-k}{2} =  (2k-1)\lfloor{\frac{n}{2k}}\rfloor + \frac{3r}{2}-k-1.$
\end{itemize}}
~\end{proof}
%
\begin{figure}[htb]
\begin{center}
\leavevmode
\includegraphics[width=0.8\textwidth]{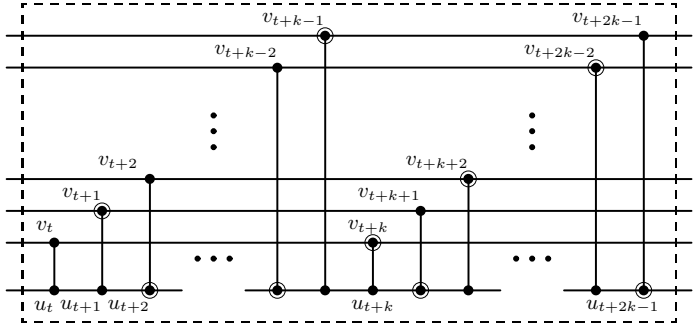}
\end{center}
\caption{$I_t$ as a Special type $2$ segment.}
\label{type 2}
\end{figure}
In the next theorem we establish a lower bound for $\alpha(P(n, k))$ for odd $n$ and even $k$.\\
\begin{theorem}
\label{n odd k even}
If $n$ is odd and $k>2$ is even then we have:
 \begin{center}
$ \alpha(P(n,k)) \geq (2k-1)\lfloor{\frac{n}{2k}}\rfloor
 + \left\{ \begin{array}{ll}
- \frac{k}{2} + 2            &  \hspace{.5cm}\textrm{if $r=1$},\\\\
\frac{3r-k-1}{2}             &  \hspace{.5cm}\textrm{if $ 1 < r < k $},\\\\
\frac{k}{2}+ \frac{r-1}{2}   &  \hspace{.5cm}\textrm{if $ k < r < 2k $}.\\
\end{array} \right.$
\end{center}
where  $r$ is the remainder of $n$ modulo  $2k$.
\end{theorem}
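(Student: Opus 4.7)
The plan is to extend the construction of Theorem~\ref{nk even} to the case where $n$ is odd. As before, I would partition the vertices of $P(n,k)$ into $\lfloor n/(2k) \rfloor$ consecutive $2k$-segments together with one leftover $r$-segment, and in each $2k$-segment choose the independent set of size $2k-1$ given by the Special type~$2$ pattern of Figure~\ref{type 2}. This already contributes $(2k-1)\lfloor n/(2k) \rfloor$ vertices. The remaining task is to fit as many additional independent vertices as possible into the $r$-segment while respecting the spoke edges $u_i v_i$, the outer path edges, the two boundary spokes linking the $r$-segment to its neighbouring $2k$-segments, and, most importantly, the inner jump edges $v_i v_{i+k}$ that wrap around and can connect a $v$-vertex in the $r$-segment to a selected $v$-vertex of a $2k$-segment.

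Because $n$ is odd and $k$ is even, $r$ is odd and in particular $r\neq 0, k$; so the three cases $r=1$, $1<r<k$, and $k<r<2k$ are exhaustive. For each case I would exhibit an explicit subset of the $r$-segment that is independent in $P(n,k)$ and verify its size. For $1 < r < k$, I expect an alternating $u$-pattern on indices $3,5,\dots$ combined with an offset $v$-pattern of the same shape used in Theorem~\ref{nk even}, truncated to odd length, contributing $(3r-k-1)/2$ vertices. For $k < r < 2k$, the five-block pattern of Theorem~\ref{nk even} can be adapted by shortening the final $v$-block by one, yielding $k/2 + (r-1)/2$ extra vertices. The degenerate case $r=1$ is genuinely different: the single spoke that fits in the $r$-segment, together with the jump edge $v_{1-k}v_1$, forces a local rearrangement of the Special type~$2$ pattern of one adjacent $2k$-segment, the cheapest such rearrangement costing $k/2-2$ vertices relative to the clean count, which explains the $-k/2+2$ correction.

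The main obstacle will be verifying wrap-around compatibility. Reading off Figure~\ref{type 2}, the chosen $v$-vertices in a $2k$-segment starting at index $t$ are $v_{t+2},v_{t+4},\dots,v_{t+k-2},v_{t+k+1},v_{t+k+3},\dots,v_{t+2k-1}$, and one has to confirm that no such vertex is joined by a jump edge $v_iv_{i+k}$ to any selected $v$-vertex in the $r$-segment, nor to any selected $v$-vertex of the first $2k$-segment across the boundary. The parity condition ``$k$ even, $r$ odd'' is exactly what makes this delicate: translation by $k$ preserves index parity while translation by $n \equiv r \pmod{2}$ flips it, so the patterns on the two sides of the $r$-segment land in opposite parity classes modulo~$2k$ and must be chosen carefully to avoid collisions.

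Once the three explicit constructions are written down, independence reduces to a finite check on indices modulo~$2k$, and summing the contributions gives the three stated bonuses on top of $(2k-1)\lfloor n/(2k) \rfloor$. I do not expect any step beyond this case-by-case verification; the strategic content is concentrated in selecting patterns whose $u$-support and $v$-support interact correctly with the Special type~$2$ pattern at both ends of the $r$-segment.
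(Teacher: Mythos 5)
Your overall strategy --- tile $P(n,k)$ with $\lfloor\frac{n}{2k}\rfloor$ segments carrying the pattern of Figure~\ref{type 2} plus one leftover $r$-segment, then split into the three cases $r=1$, $1<r<k$, $k<r<2k$ --- is exactly the paper's, and in the case $k<r<2k$ your plan coincides with what the paper actually does: the standard $(2k-1)$-vertex pattern is kept on every $2k$-segment and only the $r$-segment is filled in, contributing $\frac{k}{2}+\frac{r-1}{2}$ extra vertices.

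However, for $1<r<k$ there is a genuine gap. You propose to keep the full $(2k-1)$-vertex pattern on all $\lfloor\frac{n}{2k}\rfloor$ segments and only add vertices inside the $r$-segment, so your bonus term would necessarily be nonnegative; but the claimed bonus $\frac{3r-k-1}{2}$ is negative whenever $r<\frac{k+1}{3}$. For instance $n=43$, $k=10$ gives $q=2$, $r=3$ and a bonus of $-1$, and indeed $\alpha(P(43,10))=37<38=(2k-1)\lfloor\frac{n}{2k}\rfloor$ (Table~1), so no independent set of $P(43,10)$ can contain $2k-1$ vertices from each of the two $2k$-segments. The mechanism of failure is the one you yourself flag as ``delicate'': when $r<k-1$ the inner edges $v_iv_{i+k}$ issuing from the last $k$ spokes of the final $2k$-segment jump over the short $r$-segment into the first $2k$-segment with an index shift of $r$, which is odd, so they land in the wrong parity class and hit selected $v$-vertices (in $P(27,6)$, the selected $v_{23}$ of the second segment is adjacent to the selected $v_{2}$ of the first). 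The paper avoids this by redesigning the selection on the last $2k$-segment \emph{jointly} with the $r$-segment, taking only $\frac{3k+r-1}{2}\le 2k-1$ vertices from that last segment, with the second $v$-block truncated to $\{v_{k+1},v_{k+3},\dots,v_{k+r}\}$; your proposal is missing exactly this modification. Likewise for $r=1$ you correctly sense that an adjacent segment must be rearranged at a net cost of $\frac{k}{2}-2$, but you exhibit no pattern and give no argument that such a rearrangement exists, whereas the whole content of the proof in that case is the explicit list of chosen vertices.
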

\begin{proof}{
We construct an independent set for the graph $P(n,k)$. Similar to the proof of Theorem \ref{nk even},
first we partition the vertices of the graph into $\lfloor\frac{n}{2k}\rfloor$ $2k$-segments
and a remaining segment of size $r$.
Without loss of generality, we can assume that the last $2k$-segment starts from the first spoke and
the remaining segment starts from the $(2k+1)$-st spoke and finishes at the $(2k+r)$-th spoke. We also label
the $2k$-segments with indices $1,2,\dots,\lfloor\frac{n}{2k}\rfloor$.

From each of $2k$-segments $1,2,\dots,\lfloor\frac{n}{2k}\rfloor -1 $, we choose $2k-1$ vertices as
shown in Figure \ref{type 2}. We also choose the following
vertices from the last $2k$-segment and the remaining $r$-segment:
\begin{itemize}
\item{$r=1$:}\\
$\{u_2, u_4, \dots, u_{k-2},u_k\}\  \cup \{u_{k+3}, u_{k+5}, \dots, u_{2k-1},u_{2k+1}\} \ \cup
\{v_{k+1}\} \ \cup \\ \{v_{k+2},v_{k+4},\dots,v_{2k-2}, v_{2k}\}.$
\item {$1 < r < k: $}\\
$\{u_3,u_5,\dots,u_{k-3}, u_{k-1}\}  \ \cup \ \{u_{k+2}, u_{k+4}, \dots, u_{2k-2},u_{2k}\} \ \cup \\
\{u_{2k+3},u_{2k+5},\dots,u_{2k+r-2}, u_{2k+r}\} \ \cup \  \{v_2,v_4,\dots,v_{k-2}, v_{k}\} \  \cup  \\
\{v_{k+1},v_{k+3},\dots,v_{k+r-2}, v_{k+r}\}  \ \cup \
\{v_{2k+2},v_{2k+4},\dots,v_{2k+r-3}, v_{2k+r-1}\}. $
\item {$k < r < 2k: $}\\
$\{u_3,u_5,\dots,u_{k-3}, u_{k-1}\} \  \cup \  \{u_{k+2}, u_{k+4}, \dots, u_{2k-2},u_{2k}\}  \ \cup \\
\{u_{2k+3},u_{2k+5},\dots,u_{2k+r-2}, u_{2k+r}\} \ \cup \ \{v_2,v_4,\dots,v_{k-2}, v_{k}\}  \ \cup \\
\{v_{k+1},v_{k+3},\dots,v_{2k-3}, v_{2k-1}\}  \ \cup \ \{v_{2k+2},v_{2k+4},\dots,v_{3k-2}, v_{3k}\}. $
\end{itemize}
One can easily check that in each case, the given set is an independent set of size specified in the theorem.}
~\end{proof}
Notice that the upper bound given in Theorem \ref{maintheorem} and the lower bound in
Theorem \ref{nk even}, and Theorem \ref{n odd k even} are very close to each other for every fixed even $k > 2$. More precisely, we have the following corollary:
\begin{corollary}
If $ k>2$ is an even number then $\alpha(P(n,k))= \frac{(2k-1)}{2k}n + O(k).$
\end{corollary}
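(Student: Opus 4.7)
{
The plan is to sandwich $\alpha(P(n,k))$ between the upper bound of Theorem \ref{maintheorem} and the lower bounds of Theorems \ref{nk even} and \ref{n odd k even}, and then show that the deviation of each of these bounds from $\frac{(2k-1)n}{2k}$ is a function of $k$ alone (hence $O(k)$ for fixed even $k>2$), independent of $n$.

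Write $n = 2kq + r$ with $0 \le r < 2k$, so that $\lfloor n/(2k) \rfloor = q = (n-r)/(2k)$. Theorem \ref{maintheorem} immediately gives the one-sided bound
\[
\alpha(P(n,k)) \;\le\; \left\lfloor \frac{(2k-1)n}{2k} \right\rfloor \;\le\; \frac{(2k-1)n}{2k}.
\]
For the lower bound, I rewrite the leading term of Theorems \ref{nk even} and \ref{n odd k even} as
\[
(2k-1)\left\lfloor \frac{n}{2k} \right\rfloor \;=\; (2k-1)\cdot\frac{n-r}{2k} \;=\; \frac{(2k-1)n}{2k} \;-\; \frac{(2k-1)r}{2k}.
\]
Since $0 \le r < 2k$, the subtracted correction $\frac{(2k-1)r}{2k}$ is at most $2k-1$, which depends on $k$ only.

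Next, I check case by case that the additive corrections supplied by Theorems \ref{nk even} and \ref{n odd k even} are likewise bounded by a function of $k$ alone. In the even-$n$ case these terms are $r/2$ or $3r/2 - k - 1$, both of which are at most $3k - 1$ in absolute value since $r < 2k$. In the odd-$n$ case the terms $-k/2 + 2$, $(3r-k-1)/2$ and $k/2 + (r-1)/2$ are similarly uniformly bounded by functions of $k$. Combining with the rewrite above gives, in every case,
\[
\alpha(P(n,k)) \;\ge\; \frac{(2k-1)n}{2k} \;-\; C(k)
\]
for some constant $C(k)$ that depends only on $k$. Together with the upper bound this yields $\alpha(P(n,k)) = \frac{(2k-1)n}{2k} + O(k)$.

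No genuine obstacle arises; the argument is purely arithmetic bookkeeping, since all the nontrivial combinatorial work is already done in Theorems \ref{maintheorem}, \ref{nk even} and \ref{n odd k even}. The only care needed is to verify, in each of the five sub-cases of the two lower-bound theorems, that the additive term remains $O(k)$ uniformly in $n$ once $r$ is constrained to the interval $[0, 2k)$.}
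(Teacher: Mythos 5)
Your proof is correct and follows exactly the route the paper intends: the corollary is stated there without an explicit proof, as an immediate consequence of the fact that the upper bound of Theorem \ref{maintheorem} and the lower bounds of Theorems \ref{nk even} and \ref{n odd k even} all differ from $\frac{(2k-1)n}{2k}$ by quantities bounded in terms of $k$ alone. Your case-by-case bookkeeping with $n=2kq+r$, $0\le r<2k$, supplies precisely the arithmetic the authors left implicit.
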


Notice that our lower bounds are considerably better than the lower
bounds obtained  in \cite{MR2381433} and \cite{FoxRaluccaStanica}.
\section{Some exact values}
In this section, we will find the exact value of  $\alpha (P(n,k))$ for some pairs of $n,k$.\\

\begin{proposition}
\label{k=4}
If $n > 8$, then:\\
\begin{displaymath}
\alpha(P(n,4))=
\left\{ \begin{array}{ll}
\frac{7n}{8}          &  \hspace{.5cm}\textrm{if $n \equiv 0 \pmod{8}$},\\  \\
\frac{7}{8}(n-1)      &  \hspace{.5cm}\textrm{if $n \equiv 1 \pmod{8}$},\\  \\
\frac{7}{8}(n-2) +1   &  \hspace{.5cm}\textrm{if $n \equiv 2 \pmod{8}$},\\ \\
\frac{7}{8}(n-3) +2   &  \hspace{.5cm}\textrm{if $n \equiv 3 \pmod{8}$},\\ \\
\frac{7}{8}(n-5) +4   &  \hspace{.5cm}\textrm{if $n \equiv 5 \pmod{8}$}.\\
\end{array} \right.
\end{displaymath}

\begin{displaymath}
\alpha(P(n,4))\geq
\left\{ \begin{array}{ll}
\frac{7}{8}(n-4)+2   &  \hspace{.5cm}\textrm{if $n \equiv 4 \pmod{8}$},\\ \\
\frac{7}{8}(n-6)+4   &  \hspace{.5cm}\textrm{if $n \equiv 6 \pmod{8}$},\\\\
\frac{7}{8}(n-7)+5   &  \hspace{.5cm}\textrm{if $n \equiv 7 \pmod{8}$}.\\
\end{array} \right.
\end{displaymath}
\end{proposition}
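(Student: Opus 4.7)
The plan is to combine the general upper bound of Theorem~\ref{maintheorem} with the lower bounds given in Theorem~\ref{nk even} and Theorem~\ref{n odd k even}, specializing to $k=4$, and then to verify case by case (according to the residue of $n$ modulo $2k=8$) that these bounds either coincide, in which case we have an exact value, or differ by a small additive amount, in which case only the lower bound is retained.

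First I would specialize the general upper bound. For $k=4$ and any $n>8$, Theorem~\ref{maintheorem} gives $\alpha(P(n,4)) \leq \lfloor 7n/8 \rfloor$. Writing $n = 8m + s$ with $0 \leq s \leq 7$, this evaluates to $7m + \lfloor 7s/8 \rfloor$, which I would tabulate explicitly for each $s$. Next, I would substitute $k=4$ into the lower-bound constructions. For even $n$ (Theorem~\ref{nk even}), the bound is $7\lfloor n/8 \rfloor + r/2$ if $r\le 4$ and $7\lfloor n/8 \rfloor + (3r/2 - 5)$ if $r>4$, where $r$ is the remainder of $n$ modulo $8$; for odd $n$ (Theorem~\ref{n odd k even}), the bound is $7\lfloor n/8 \rfloor$ plus $0$, $(3r-5)/2$, or $(r+3)/2$ according to whether $r=1$, $1<r<4$, or $4<r<8$.

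Then I would compare the two sides residue by residue. For $n \equiv 0, 1, 2, 3, 5 \pmod{8}$, a short arithmetic check shows that the lower bound from Theorem~\ref{nk even} or Theorem~\ref{n odd k even} matches $\lfloor 7n/8 \rfloor$ exactly, which yields the claimed exact values $7n/8$, $7(n-1)/8$, $7(n-2)/8+1$, $7(n-3)/8+2$, $7(n-5)/8+4$ respectively. For $n \equiv 4, 6, 7 \pmod{8}$, the same computation shows that the lower bound is one less than $\lfloor 7n/8 \rfloor$, giving only the inequalities $\alpha(P(n,4)) \geq 7(n-4)/8+2$, $\alpha(P(n,4)) \geq 7(n-6)/8+4$, and $\alpha(P(n,4)) \geq 7(n-7)/8+5$.

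There is no substantive obstacle here beyond bookkeeping: the proposition is essentially the $k=4$ evaluation of the bounds already established, and the only care required is to correctly split into the five exact cases and the three remaining lower-bound cases, and to verify the arithmetic matches term by term.
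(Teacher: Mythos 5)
Your proposal is correct and follows exactly the paper's route: the paper's proof is literally the one-line observation that the proposition is a straight consequence of Theorems \ref{maintheorem}, \ref{nk even}, and \ref{n odd k even}, and your residue-by-residue arithmetic (matching $\lfloor 7n/8\rfloor$ for $n\equiv 0,1,2,3,5 \pmod 8$ and falling one short for $n\equiv 4,6,7\pmod 8$) is precisely the bookkeeping that justifies it. No gaps.
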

\begin{proof}
{This result is straight consequence of Theorems
~\ref{maintheorem}, \ref{nk even}, and \ref{n odd k even}.}
~\end{proof}
Notice that for $k=4$ and $ n \equiv 4,6$ or   $7 \pmod 8$, the upper bound and lower bound differ
by $1$. In fact, for $ n<700$ the exact of $\alpha(P(n,k))$ is the same as our lower bound as
we checked by computer.

\begin{proposition}
If  $ k>2$ is an even number and $n \equiv 0,  2, k-1$ or $k+1 \pmod{2k}$ then
$\alpha(P(n,k))= \lfloor{\frac{(2k-1)n}{2k}}\rfloor$.
\end{proposition}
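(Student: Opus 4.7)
The plan is to combine the upper bound of Theorem \ref{maintheorem} with the lower bounds of Theorem \ref{nk even} (even $n$) and Theorem \ref{n odd k even} (odd $n$), and verify case-by-case that they coincide for the four stated residue classes. Write $n=2km+r$ with $0\le r<2k$, so the upper bound equals
$$\left\lfloor\frac{(2k-1)n}{2k}\right\rfloor=(2k-1)m+\left\lfloor\frac{(2k-1)r}{2k}\right\rfloor.$$
The strategy is simply to evaluate both the upper bound and the appropriate lower bound in each of the four cases and observe they are equal.

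For $r=0$: then $n$ is even, the floor term vanishes, and Theorem \ref{nk even} gives the lower bound $(2k-1)m$, matching the upper bound. For $r=2$: since $k>2$ we have $r\le k$, so Theorem \ref{nk even} yields $(2k-1)m+\tfrac{r}{2}=(2k-1)m+1$; on the other hand $\lfloor(4k-2)/(2k)\rfloor=\lfloor 2-1/k\rfloor=1$, so the two bounds agree.

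For $r=k-1$: since $k$ is even, $r$ is odd and hence $n$ is odd, and $1<r<k$, so Theorem \ref{n odd k even} gives $(2k-1)m+\tfrac{3r-k-1}{2}=(2k-1)m+(k-2)$. The corresponding floor is $\lfloor(2k^2-3k+1)/(2k)\rfloor=\lfloor k-\tfrac{3}{2}+\tfrac{1}{2k}\rfloor=k-2$, matching. For $r=k+1$: again $n$ is odd and $k<r<2k$, so Theorem \ref{n odd k even} produces $(2k-1)m+\tfrac{k}{2}+\tfrac{r-1}{2}=(2k-1)m+k$, while $\lfloor(2k^2+k-1)/(2k)\rfloor=\lfloor k+\tfrac{1}{2}-\tfrac{1}{2k}\rfloor=k$, again matching.

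There is really no obstacle here beyond the floor arithmetic; the key observation is that the four residues $0,2,k-1,k+1$ are precisely those for which the constructive lower bounds from Section 3 hit the general upper bound of Theorem \ref{maintheorem} exactly. The only thing worth double-checking is that $k>2$ is used correctly (it guarantees $2\le k$ for the $r=2$ case and $1<k-1$ for the $r=k-1$ case, so the right branches of the piecewise lower bounds apply).
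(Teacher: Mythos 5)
Your proposal is correct and follows exactly the paper's route: the paper also derives this proposition by observing that the upper bound of Theorem \ref{maintheorem} coincides with the lower bounds of Theorems \ref{nk even} and \ref{n odd k even} in these four residue classes (it merely asserts this without writing out the floor arithmetic, which you have verified explicitly and correctly).
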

\begin{proof}
{This assertion is trivial consequence of Theorems
\ref{maintheorem},\ref{nk even} and \ref{n odd k even}. In fact
the upper bound and lower bounds we have for $\alpha(P(n,k))$ are
identical in these cases.} ~\end{proof}
\section{Behsaz-Hatami-Mahmoodian's conjecture}
\begin{conjecture}
\label{conj 1}
{\rm(\cite{MR2381433}).}
For all $n$, $k$ we have $\beta (P(n,k)) \leq n + \lceil \frac{n}{5}\rceil$.\\
\end{conjecture}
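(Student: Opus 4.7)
The plan is to dualize the conjecture via the identity $\alpha(P(n,k)) + \beta(P(n,k)) = 2n$, which recasts the claim as $\alpha(P(n,k)) \geq n - \lceil n/5\rceil = \lfloor 4n/5\rfloor$. I will attack this lower bound by splitting on the parity of $k$ and combining the classical results listed in Section~1 with the new lower bounds of Section~3. The full conjecture appears out of reach with present techniques, but the subrange $n > 3k$ should yield to this combined approach.

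For odd $k$ the target is met by structural observations. When $n$ is even, $P(n,k)$ is bipartite and carries the perfect matching of spokes, so $\alpha(P(n,k)) = n$ and the inequality is immediate. When $n$ is odd, I would invoke item~IV to get $\alpha(P(n,k)) \geq n - \frac{k+1}{2}$ and then verify $\frac{k+1}{2} \leq \lceil n/5\rceil$ under $n > 3k$; since both $n$ and $k$ are odd this reduces to checking $(3k+2)/5 \geq (k+1)/2$, i.e.\ $6k+4 \geq 5k+5$, which holds for every $k \geq 1$.

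For even $k$ the case $k = 2$ is immediate from item~II, which states $\alpha(P(n,2)) = \lfloor 4n/5\rfloor$. For even $k \geq 4$, I would write $n = 2km + r$ with $0 \leq r < 2k$ and apply Theorem~\ref{nk even} (when $n$ is even) or Theorem~\ref{n odd k even} (when $n$ is odd) to obtain $\alpha(P(n,k)) \geq (2k-1)m + c(r)$ for the explicit correction $c(r)$ appearing there. The required inequality reduces to $\lceil (2km+r)/5\rceil \geq m + r - c(r)$, and the hypothesis $n > 3k$ forces $m \geq 1$ with enough slack to close the estimate in every residue class by a short mechanical check.

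The main obstacle is the narrow boundary regime $3k < n \leq 4k$ where $m = 1$: the gap between our lower bound and $\lfloor 4n/5\rfloor$ is tightest here, and the deficit $r - c(r)$ can reach roughly $k/2$, so one must verify in each residue of $r$ modulo $2k$ (and for each parity of $n$ via Theorem~\ref{n odd k even}) that $\lceil n/5\rceil$ still dominates. A subtler difficulty, and the true reason the conjecture remains open, is the range $2k < n \leq 3k$: the lower bounds of Section~3 are not sharp enough to force $\alpha \geq \lfloor 4n/5\rfloor$ in this regime, so the present strategy cannot settle the conjecture beyond the computer verification for $n < 78$.
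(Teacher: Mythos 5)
Your proposal matches the paper's treatment of this conjecture almost exactly: the paper likewise proves it only in the cases you identify (small $k$ from the known exact values, the bipartite case $n$ even and $k$ odd, the odd--odd case via $\alpha(P(n,k)) \geq n - \frac{k+1}{2}$, and even $k$ via the decomposition $n = 2kq+r$ together with the lower bounds of Theorems \ref{nk even} and \ref{n odd k even}), arriving at the same conclusion that $n > 3k$ suffices while $2k < n \leq 3k$ remains open. The only difference is cosmetic: in a few tight subcases (e.g.\ $q=1$ with $r>k$, or small even $k$) the paper falls back on the exact values in Table $1$ or on part (a) rather than carrying out your ``short mechanical check'' analytically.
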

Notice that, since $\alpha(G)+ \beta(G)= |V(G)|$, this conjecture is equivalent to
$\alpha(P(n,k))\geq \lfloor{\frac{4n}{5}}\rfloor$.

\begin{theorem}
The above conjecture is valid in the following cases:
\begin{itemize}
\item [a)] $k=1,2,3,4,5$.
\item [b)] $n$ is even and $k$ is odd.
\item [c)] $n,k$ are odd and $ n \geq \frac{5(k+1)}{2}$.
\item [d)] $n,k$ are even.
\item [e)] $n$ is odd,  $k$ is even  and $ n > 3k$.
\end{itemize}
\end{theorem}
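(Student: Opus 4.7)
The plan is to translate the conjecture into its equivalent form $\alpha(P(n,k)) \geq \lfloor 4n/5 \rfloor$ (as already noted just after the conjecture) and then verify each of the five cases by substituting the appropriate previously-established lower bound on $\alpha(P(n,k))$ into this single inequality.

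First I would dispose of (b) and (c), which are almost immediate. For (b), fact V gives $\alpha(P(n,k)) = n$, which trivially exceeds $\lfloor 4n/5 \rfloor$. For (c), fact IV yields $\alpha(P(n,k)) \geq n - (k+1)/2$; the hypothesis $n \geq 5(k+1)/2$ rearranges exactly to $n - (k+1)/2 \geq 4n/5$, so the bound is achieved.

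For (a), the small-$k$ cases, I would plug the explicit formulas into the target inequality. For $k \in \{1,3,5\}$ (facts I, III, VIII), $\alpha$ differs from $n$ by at most a constant $c_k \in \{1,2,3\}$, so $\alpha \geq n - c_k \geq 4n/5$ once $n \geq 5c_k$; the finitely many smaller $n$ with $n > 2k$ are settled by direct evaluation of the explicit formulas. For $k=2$, fact II gives $\alpha = \lfloor 4n/5 \rfloor$ exactly. For $k=4$, Proposition~\ref{k=4} provides a lower bound for each residue of $n$ modulo $8$, and the claim is checked residue-by-residue.

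Cases (d) and (e) form the core of the argument and rely on Theorems~\ref{nk even} and~\ref{n odd k even}. Writing $n = 2km + r$ with $m = \lfloor n/(2k) \rfloor$, I would substitute the lower bounds of those theorems into $\alpha \geq \lfloor 4n/5 \rfloor$ (it suffices to show $5\alpha \geq 4n - 4$, since both sides are integers) and reduce to a linear inequality in $m, k, r$. In (d), the two sub-cases $r \leq k$ and $r > k$ give conditions of the form $(2k-5)m \geq \frac{3r}{2} - 4$ and $(2k-5)m \geq 5k + 1 - \frac{7r}{2}$ respectively, both of which hold for every even $k \geq 4$ and every $m \geq 1$, using $r < 2k$. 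Case (e) proceeds similarly with the three sub-cases of Theorem~\ref{n odd k even}.

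The main obstacle is the rationale for the extra hypothesis $n > 3k$ in (e). Without it, the sub-case $m = 1$ paired with a small residue $r \leq k$ would escape the bound, since the lower bound of Theorem~\ref{n odd k even} only barely surpasses $4n/5$ in the range $2k < n \leq 3k$. The hypothesis $n > 3k$ forces either $m \geq 2$ or $r > k$, which is precisely what is needed for the linear inequalities to hold. Aside from these boundary checks, the remainder is routine arithmetic.
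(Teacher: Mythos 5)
Your proposal is correct and follows essentially the same route as the paper: reduce to $\alpha(P(n,k))\geq\lfloor 4n/5\rfloor$, dispatch (a)--(c) via the known exact values and the bounds (IV)--(V), and handle (d)--(e) by writing $n=2kq+r$ and substituting the lower bounds of Theorems~\ref{nk even} and~\ref{n odd k even} subcase by subcase, with the hypothesis $n>3k$ serving exactly to exclude the problematic subcase $q=1$, $1<r<k$. The one substantive difference is that your sharper integer target $5\alpha\geq 4n-4$ closes the $q=1$ subcases of (d) analytically for all even $k\geq 4$, whereas the paper compares against $\tfrac{4}{5}n$ without the floor and must fall back on the computed Table~1 for $k<10$ (resp.\ $k<20$) there.
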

\begin{proof}{
\begin{itemize}
\item [a)]
This case is a straight consequence of (I), (II), (III),
 Proposition \ref{k=4} and (VIII).
\item [b)]
In this case $P(n,k)$ is a bipartite graph and $\alpha(P(n,k)) = n$.
\item [c)]
$\alpha(P(n,k)) \geq n- \frac{k+1}{2} $ {\rm(\cite{MR2381433})}.
For $n \geq \frac{5(k+1)}{2}$ this  lower bound
is greater than  $\lfloor\frac{4n}{5}\rfloor$.
\item [d)]
Let $n=2kq+r$ where $q\geq1$ and $0 \leq r < 2k$. We consider the following subcases:
\begin{itemize}
\item
If $r \leq k$  and $q=1$ then by Theorem \ref{nk even},
$\alpha(P(n,k)) \geq 2k-1 +\frac{r}{2} \geq {\frac{4}{5}}(2k+r)$ for any $ k \geq 10$. For $ k < 10$
conjecture holds based on the information provided in Table $1$.
\item
If $ r \leq k$ and $q > 1$  then by Theorem \ref{nk even},
$\alpha(P(n,k)) \geq (2k-1)q +\frac{r}{2} \geq {\frac{4}{5}}(2kq+r)$
for any $ k \geq 4$. For $ k < 4$, conjecture follows from $(a)$.
\item
If $r > k$  and $q=1$ then by Theorem \ref{nk even},
$\alpha(P(n,k)) \geq 2k-1 +\frac{3r}{2}-k-1 \geq {\frac{4}{5}}(2k+r)$ for any $ k \geq 20$.
For $ k < 20$ conjecture holds based on the information provided in Table $1$.
\item
If $ r > k$ and $q > 1$  then by Theorem \ref{nk even},
$\alpha(P(n,k)) \geq (2k-1)q + \frac{3r}{2}-k-1\geq {\frac{4}{5}}(2kq+r)$
for any $ k \geq 6$. For $ k < 6$, conjecture follows from $(a)$.
\end{itemize}
\item [e)]
Similar to the previous part, let $n=2kq+r$ where $q\geq1$  and
$0 \leq r < 2k$. We consider the following subcases:
\begin{itemize}
\item
If $r =1$  and $q=1$ then $P(n,k)= P(2k+1,k)$ which is isomorphic to $P(2k+1,2)$.
(For more information about isomorphic generalized Petersen graphs see \cite{MR2475014}).
\item
If $r =1$  and $q \geq 2$ then  by Theorem \ref{n odd k even},
$\alpha(P(n,k)) \geq (2k-1)q - \frac{k}{2}+2 \geq {\frac{4}{5}}(2kq+1)$
for every $k >2$.
\item
If $ 1<r<k$ then $q$ has to be larger than $1$. In fact if $ 1<r<k$ and
$q=1$ then $n=2kq+r < 3k$.
For $ 1<r<k$ and $ q \geq 2$  then by Theorem \ref{n odd k even},
$\alpha(P(n,k)) \geq (2k-1)q + \frac{3r-k-1}{2} \geq {\frac{4}{5}}(2kq+r)$ for
every $k \geq 2$. (Note that since $n$ is odd and $k$ is even, $ r >1$ implies that $r\geq 3$).
\item
If $ k < r < 2k$ then  by Theorem \ref{n odd k even},
$\alpha(P(n,k)) \geq (2k-1)q +  \frac{k}{2}+ \frac{r-1}{2} \geq {\frac{4}{5}}(2kq+r)$
for $ k \geq 5$. For $ k < 5$ then the assertion is concluded from part $(a)$.
\end{itemize}
\end{itemize}}
~\end{proof}
\begin{corollary}
If $ n > 3k$ then $\alpha(P(n,k)) \geq \lfloor\frac{4n}{5}\rfloor$, and
Behsaz-Hatami-Mahmoodian's conjecture holds.
\end{corollary}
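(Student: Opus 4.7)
My plan is to derive the corollary purely as a case analysis of the preceding theorem, since that theorem already establishes the conjecture in a collection of cases whose union I expect to cover all pairs $(n,k)$ with $n>3k$. First I would recall the equivalence $\alpha(P(n,k))\geq\lfloor 4n/5\rfloor \iff \beta(P(n,k))\leq n+\lceil n/5\rceil$ noted just before the theorem, so that it suffices to verify the lower bound $\alpha(P(n,k))\geq\lfloor 4n/5\rfloor$ whenever $n>3k$.

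Next I would split on the size of $k$. If $k\in\{1,2,3,4,5\}$, part (a) of the theorem immediately gives the conjecture regardless of the hypothesis $n>3k$. So I may assume $k\geq 6$, and then I would split into the four parity combinations. If $k$ is odd and $n$ is even, part (b) applies; if both $n$ and $k$ are even, part (d) applies; if $n$ is odd and $k$ is even, part (e) applies directly because the hypothesis of (e) is exactly $n>3k$.

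The only combination that requires an arithmetic check is when both $n$ and $k$ are odd with $k\geq 7$. Here I would invoke part (c), whose hypothesis is $n\geq 5(k+1)/2$. I would observe that for odd $k\geq 5$ the inequality $3k\geq 5(k+1)/2$ is equivalent to $6k\geq 5k+5$, i.e.\ $k\geq 5$, which holds. Hence $n>3k\geq 5(k+1)/2$, and part (c) applies.

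The real work has been done in the preceding theorem, so I do not expect any obstacle here; the only thing to be careful about is confirming that the four parity cases together with the small-$k$ case exhaust all pairs with $n>3k$, and that the arithmetic comparison $3k\geq 5(k+1)/2$ does in fact hold in the odd-odd subcase whenever $k\geq 6$ (equivalently $k\geq 7$, since $k$ is odd). Once this is laid out, the corollary falls out as an immediate consequence.
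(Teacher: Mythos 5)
Your proposal is correct and matches the paper's intent: the corollary is stated there without proof as an immediate consequence of the preceding theorem, and your case analysis (small $k$ via part (a), the three parity cases via (b), (d), (e), and the odd--odd case via (c) using $n>3k\geq \frac{5(k+1)}{2}$ for $k\geq 5$) is exactly the verification the paper leaves implicit. No gaps.
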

\section{Polynomial algorithm for $\alpha(P(n,k))$}
In this section we will prove that the independence number of generalized
Petersen graphs with fixed $k$ can be found in linear time, $O(n)$.
This result is a special case of a deep theorem stating that the problem of finding the
independence number of graphs with bounded treewidth can be solved in linear time of
the number of vertices of the graph.

In the continuation we will show that for every
fixed $k$ and any integer $n>2k$ the treewidth of $P(n,k)$ is bounded.
First, we need to formally define the concepts of tree decomposition and treewidth of a graph.
\begin{definition}
Let $G=(V,E)$ be a graph. A tree decomposition of $G$ is a pair $(X, T)$, where
$X = \{X_1, X_2, \ldots ,X_n \} $ is a family of subsets of $V$, and $T$ is a tree whose nodes are
the subsets $X_j$, satisfying the following properties:
\begin{itemize}
\item[1)]
The union of all sets $X_j$ equals $V$.
\item[2)]
For every edge $(v,w)$ in the graph, there is a subset $X_j$ that contains both $v$ and $w$.
\item[3)]
If $X_j$ is on the path from $X_i$ to $X_l$ in $T$ then $X_i \cap X_l \subseteq X_j$.
In  other words, for all vertices $v \in V,$ all nodes $X_i$ which contain $v$ induce
a connected subtree of $T$.
\end{itemize}
The width of $(X,T)$ is defined to be the size of the largest $X_i$ minus one.
The treewidth, $tw(G)$, of the
graph $G$ is defined to be the minimum width of all its tree decompositions.
The treewidth  will be taken as a measure of how much a graph resembles
a tree.
\end{definition}
\begin{theorema}
{\rm(\cite{MR1268488})}
\label{alghorithm}
The problem of finding a maximum independent set of a graph $G$ with bounded treewidth,  $tw(G)\leq l$
can be solved in $O(2^{3l}n)$  by dynamic programming
techniques, where $n$ is the number of  vertices of  graph.
\end{theorema}
For more details see for instance
\cite{MR985145}, \cite{MR1268488}, \cite{MR1023630}, 
and  \cite{MR855559}.
\begin{theorem}
For any fixed $k$, the problem of finding independence number of the graphs $P(n,k)$ can be
solved by an algorithm with running time $O(n)$.
\end{theorem}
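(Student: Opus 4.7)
The plan is to reduce directly to Theorem~\ref{alghorithm}: it suffices to show that for each fixed $k$, the treewidth $tw(P(n,k))$ is bounded above by a function of $k$ alone, independent of $n$. Once such a bound $\ell = \ell(k)$ is obtained, Theorem~\ref{alghorithm} supplies a dynamic-programming algorithm running in time $O(2^{3\ell} n)$, which for fixed $k$ absorbs the constant $2^{3\ell}$ into the $O(\cdot)$ and yields $O(n)$, as required.

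To bound $tw(P(n,k))$ I would construct an explicit path decomposition by sliding a window of length $k+2$ along the indices. For $t = 1, 2, \dots, n-k-1$, set
$$X_t \;=\; \{u_i, v_i : t \leq i \leq t+k+1\} \cup A, \qquad A := \{u_1, \dots, u_{k+1}, v_1, \dots, v_{k+1}\},$$
and arrange the bags $X_1, X_2, \dots, X_{n-k-1}$ along a path $T$. The three axioms of a tree decomposition are then routine to verify. Every vertex occurs in some bag. For every edge, some bag contains both endpoints: spokes $u_iv_i$ and outer edges $u_iu_{i+1}$ are covered by any bag whose window contains index $i$; non-wrapping inner edges $v_iv_{i+k}$ with $i+k \leq n$ lie in $X_i$; the wraparound inner edges $v_iv_{i+k-n}$ for $n-k+1 \leq i \leq n$ have one endpoint in $A$ and the other in $X_{n-k-1}$; and the wraparound outer edge $u_nu_1$ is handled similarly with $u_1 \in A$ and $u_n \in X_{n-k-1}$. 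For each vertex the set of bags containing it forms a contiguous subpath of $T$: anchor vertices lie in every bag, and non-anchor vertices occupy the standard sliding-window interval. Each bag has at most $2(k+2) + 2(k+1) = 4k+6$ vertices, so $tw(P(n,k)) \leq 4k+5$.

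The main obstacle is precisely the cyclic structure of $P(n,k)$: a naive sliding-window decomposition handles the linear "body" of the graph perfectly, but the $O(k)$ edges that wrap around the two cycles join vertices near opposite ends of the window range and therefore cannot all fit inside a single window of bounded length. Augmenting every bag with the fixed anchor set $A$ is the standard fix, and the only subtlety is checking that it does not spoil the connectivity axiom; this is fine because anchor vertices lie in every bag (trivially a connected subtree of $T$) and non-anchor vertices retain their original contiguous intervals. Plugging $\ell = 4k+5$ into Theorem~\ref{alghorithm} delivers the $O(n)$ algorithm for each fixed $k$.
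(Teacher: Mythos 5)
Your proof is correct and follows the same overall strategy as the paper: exhibit an explicit path decomposition of $P(n,k)$ of width $O(k)$ and invoke Theorem~\ref{alghorithm}. The only difference lies in the decomposition itself. You freeze the anchor set $A=\{u_1,\dots,u_{k+1},v_1,\dots,v_{k+1}\}$ into every bag and slide a single window of $k+2$ consecutive spokes, giving bags of size at most $4k+6$ and hence width at most $4k+5$; the paper instead keeps two windows of $k+1$ consecutive spokes, one starting at index $1$ and one at index $n$, and advances them alternately toward each other, giving bags of size exactly $4k+4$ and width $4k+3$. Both constructions handle the cyclic wraparound the same way in spirit --- by forcing the indices near $1$ and the indices near $n$ to cohabit a bag --- and both give $tw(P(n,k))=O(k)$, which is all the theorem needs. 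Your version is marginally easier to verify (the anchor vertices trivially satisfy the connected-subtree axiom, and the sliding window is the textbook interval argument), while the paper's is marginally tighter in the constant; the difference has no effect on the $O(n)$ conclusion.
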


\begin{proof}{
By  Theorem~\ref{alghorithm}, we only need to show that for a
given number $k$, the treewidth of $P(n,k)$ is bounded.
Consider the following tree decomposition of $P(n,k)$ of
width $4k+3$. Without loss of generality we can only consider the case where $n>2k+1$.
Let $T$ be the path of order $n-2k-1$ and define
$X_1, X_2, \dots, X_{n-2k-1}$ as follows:\\
$X_1= \{u_1, v_1, u_2, v_2, \dots, u_{k+1},v_{k+1}, u_{n-k},v_{n-k}, u_{n-k+1}, v_{n-k+1}, \dots, u_n,v_n\},$\\
$X_2=  (X_1 \setminus \{u_1, v_1 \}) \cup  \{u_{k+2}, v_{k+2} \},$\\
$X_3=  (X_2 \setminus \{u_n, v_n \}) \cup  \{u_{n-k-1}, v_{n-k-1}\},$\\
$X_4=  (X_3 \setminus \{u_2, v_2 \}) \cup  \{u_{k+3}, v_{k+3}\},$ \\
$X_5=  (X_4 \setminus \{u_{n-1}, v_{n-1} \}) \cup  \{u_{n-k-2}, v_{n-k-2} \},$
and so on.\\
Notice that
in each step we remove
two elements and add two other elements. Therefore
$|X_i|= 4k+4$ for all $i$.  One can easily see that $(X,T)$ is a tree decomposition for $P(n,k)$
where $X=\{X_1, X_2, \dots, X_{n-2k-1}\}$. Thus,  $ tw(P(n,k)) \leq 4k+3$ and by Theorem \ref{alghorithm},
the proof is complete.} ~\end{proof}
\tiny{
\def\arraystretch{1.05}
\begin{tabular}
{|@{\hspace{1pt}}c@{\hspace{1pt}}||@{\hspace{1pt}}c@{\hspace{1pt}}|@{\hspace{1pt}}c@{\hspace{1pt}}|@{\hspace{1pt}}c@{\hspace{1pt}}|@{\hspace{1pt}}c@
{\hspace{1pt}}|@{\hspace{1pt}}c@{\hspace{1pt}}|@{\hspace{1pt}}c@{\hspace{1pt}}|@{\hspace{1pt}}c@{\hspace{1pt}}|@{\hspace{1pt}}c@
{\hspace{1pt}}|@{\hspace{1pt}}c@{\hspace{1pt}}|@{\hspace{1pt}}c@{\hspace{1pt}}|@{\hspace{1pt}}c@{\hspace{1pt}}|@{\hspace{1pt}}c@
{\hspace{1pt}}|@{\hspace{1pt}}c@{\hspace{1pt}}|@{\hspace{1pt}}c@{\hspace{1pt}}|@{\hspace{1pt}}c@{\hspace{1pt}}|@{\hspace{1pt}}c@
{\hspace{1pt}}|@{\hspace{1pt}}c@{\hspace{1pt}}|@{\hspace{1pt}}c@{\hspace{1pt}}|@{\hspace{1pt}}c@{\hspace{1pt}}|@{\hspace{1pt}}c@
{\hspace{1pt}}|@{\hspace{1pt}}c@{\hspace{1pt}}|@{\hspace{1pt}}c@{\hspace{1pt}}|@{\hspace{1pt}}c@{\hspace{1pt}}|@{\hspace{1pt}}c@
{\hspace{1pt}}|@{\hspace{1pt}}c@{\hspace{1pt}}|@{\hspace{1pt}}c@{\hspace{1pt}}|@{\hspace{1pt}}c@{\hspace{1pt}}|@{\hspace{1pt}}c@
{\hspace{1pt}}|@{\hspace{1pt}}c@{\hspace{1pt}}|@{\hspace{1pt}}c@{\hspace{1pt}}|@{\hspace{1pt}}c@{\hspace{1pt}}|@{\hspace{1pt}}c@
{\hspace{1pt}}|@{\hspace{1pt}}c@{\hspace{1pt}}|@{\hspace{1pt}}c@{\hspace{1pt}}|@{\hspace{1pt}}c@{\hspace{1pt}}|@{\hspace{1pt}}c@
{\hspace{1pt}}|@{\hspace{1pt}}c@{\hspace{1pt}}|@{\hspace{1pt}}c@{\hspace{1pt}}|}
\hline
\backslashbox[0pt][lr]{$n$}{$k$}&  1 & 2 & 3 & 4 & 5 & 6  & 7 & 8 & 9 & 10 & 11 & 12 & 13 & 14 & 15 & 16 & 17 & 18 & 19 & 20 & 21 & 22 & 23 & 24 & 25 & 26 & 27 & 28 & 29 & 30 & 31 & 32 & 33 & 34 & 35 & 36 & 37 & 38 \\ \hline
5  & 4  & 4  &    &    &    &    &    &    &    &    &    &    &    &    &    &    &    &    &    &    &    &    &    &    &    &    &    &    &    &    &    &    &    &    &    &    &    &  \\ \hline
6  & 6  & 4  &    &    &    &    &    &    &    &    &    &    &    &    &    &    &    &    &    &    &    &    &    &    &    &    &    &    &    &    &    &    &    &    &    &    &    &  \\ \hline
7  & 6  & 5  & 5  &    &    &    &    &    &    &    &    &    &    &    &    &    &    &    &    &    &    &    &    &    &    &    &    &    &    &    &    &    &    &    &    &    &    & \\ \hline
8  & 8  & 6  & 8  &    &    &    &    &    &    &    &    &    &    &    &    &    &    &    &    &    &    &    &    &    &    &    &    &    &    &    &    &    &    &    &    &    &    &\\ \hline
9  & 8  & 7  & 7  & 7  &    &    &    &    &    &    &    &    &    &    &    &    &    &    &    &    &    &    &    &    &    &    &    &    &    &    &    &    &    &    &    &    &    & \\ \hline
10 & 10 & 8  & 10 & 8  &    &    &    &    &    &    &    &    &    &    &    &    &    &    &    &    &    &    &    &    &    &    &    &    &    &    &    &    &    &    &    &    &    & \\ \hline
11 & 10 & 8  & 9  & 9  & 8  &    &    &    &    &    &    &    &    &    &    &    &    &    &    &    &    &    &    &    &    &    &    &    &    &    &    &    &    &    &    &    &    & \\ \hline
12 & 12 & 9  & 12 & 9  & 12 &    &    &    &    &    &    &    &    &    &    &    &    &    &    &    &    &    &    &    &    &    &    &    &    &    &    &    &    &    &    &    &    & \\ \hline
13 & 12 & 10 & 11 & 11 & 10 & 10 &    &    &    &    &    &    &    &    &    &    &    &    &    &    &    &    &    &    &    &    &    &    &    &    &    &    &    &    &    &    &    & \\ \hline
14 & 14 & 11 & 14 & 11 & 14 & 12 &    &    &    &    &    &    &    &    &    &    &    &    &    &    &    &    &    &    &    &    &    &    &    &    &    &    &    &    &    &    &    & \\ \hline
15 & 14 & 12 & 13 & 12 & 12 & 12 & 12 &    &    &    &    &    &    &    &    &    &    &    &    &    &    &    &    &    &    &    &    &    &    &    &    &    &    &    &    &    &    & \\ \hline
16 & 16 & 12 & 16 & 14 & 16 & 13 & 16 &    &    &    &    &    &    &    &    &    &    &    &    &    &    &    &    &    &    &    &    &    &    &    &    &    &    &    &    &    &    & \\ \hline
17 & 16 & 13 & 15 & 14 & 14 & 15 & 14 & 13 &    &    &    &    &    &    &    &    &    &    &    &    &    &    &    &    &    &    &    &    &    &    &    &    &    &    &    &    &    &\\ \hline
18 & 18 & 14 & 18 & 15 & 18 & 14 & 18 & 16 &    &    &    &    &    &    &    &    &    &    &    &    &    &    &    &    &    &    &    &    &    &    &    &    &    &    &    &    &    & \\ \hline
19 & 18 & 15 & 17 & 16 & 16 & 17 & 15 & 15 & 15 &    &    &    &    &    &    &    &    &    &    &    &    &    &    &    &    &    &    &    &    &    &    &    &    &    &    &    &    & \\ \hline
20 & 20 & 16 & 20 & 16 & 20 & 16 & 20 & 17 & 20 &    &    &    &    &    &    &    &    &    &    &    &    &    &    &    &    &    &    &    &    &    &    &    &    &    &    &    &    & \\ \hline
21 & 20 & 16 & 19 & 18 & 18 & 18 & 17 & 18 & 17 & 16 &    &    &    &    &    &    &    &    &    &    &    &    &    &    &    &    &    &    &    &    &    &    &    &    &    &    &    & \\ \hline
22 & 22 & 17 & 22 & 18 & 22 & 19 & 22 & 18 & 22 & 20 &    &    &    &    &    &    &    &    &    &    &    &    &    &    &    &    &    &    &    &    &    &    &    &    &    &    &    & \\ \hline
23 & 22 & 18 & 21 & 19 & 20 & 19 & 19 & 21 & 20 & 19 & 18 &    &    &    &    &    &    &    &    &    &    &    &    &    &    &    &    &    &    &    &    &    &    &    &    &    &    & \\ \hline
24 & 24 & 19 & 24 & 21 & 24 & 22 & 24 & 19 & 24 & 21 & 24 &    &    &    &    &    &    &    &    &    &    &    &    &    &    &    &    &    &    &    &    &    &    &    &    &    &    & \\ \hline
25 & 24 & 20 & 23 & 21 & 22 & 21 & 21 & 23 & 20 & 21 & 20 & 20 &    &    &    &    &    &    &    &    &    &    &    &    &    &    &    &    &    &    &    &    &    &    &    &    &    & \\ \hline
26 & 26 & 20 & 26 & 22 & 26 & 23 & 26 & 21 & 26 & 22 & 26 & 24 &    &    &    &    &    &    &    &    &    &    &    &    &    &    &    &    &    &    &    &    &    &    &    &    &    & \\ \hline
27 & 26 & 21 & 25 & 23 & 24 & 23 & 23 & 24 & 22 & 24 & 24 & 22 & 21 &    &    &    &    &    &    &    &    &    &    &    &    &    &    &    &    &    &    &    &    &    &    &    &    & \\ \hline
28 & 28 & 22 & 28 & 23 & 28 & 24 & 28 & 24 & 28 & 23 & 28 & 25 & 28 &    &    &    &    &    &    &    &    &    &    &    &    &    &    &    &    &    &    &    &    &    &    &    &    & \\ \hline
29 & 28 & 23 & 27 & 25 & 26 & 26 & 25 & 25 & 24 & 27 & 25 & 24 & 24 & 23 &    &    &    &    &    &    &    &    &    &    &    &    &    &    &    &    &    &    &    &    &    &    &    & \\ \hline
30 & 30 & 24 & 30 & 25 & 30 & 25 & 30 & 27 & 30 & 24 & 30 & 26 & 30 & 28 &    &    &    &    &    &    &    &    &    &    &    &    &    &    &    &    &    &    &    &    &    &    &    & \\ \hline
31 & 30 & 24 & 29 & 26 & 28 & 28 & 27 & 26 & 27 & 29 & 25 & 27 & 27 & 25 & 24 &    &    &    &    &    &    &    &    &    &    &    &    &    &    &    &    &    &    &    &    &    &    & \\ \hline
32 & 32 & 25 & 32 & 28 & 32 & 27 & 32 & 30 & 32 & 26 & 32 & 28 & 32 & 29 & 32 &    &    &    &    &    &    &    &    &    &    &    &    &    &    &    &    &    &    &    &    &    &    & \\ \hline
33 & 32 & 26 & 31 & 28 & 30 & 29 & 29 & 28 & 28 & 30 & 27 & 30 & 30 & 29 & 27 & 26 &    &    &    &    &    &    &    &    &    &    &    &    &    &    &    &    &    &    &    &    &    & \\ \hline
34 & 34 & 27 & 34 & 29 & 34 & 30 & 34 & 31 & 34 & 29 & 34 & 28 & 34 & 30 & 34 & 32 &    &    &    &    &    &    &    &    &    &    &    &    &    &    &    &    &    &    &    &    &    & \\ \hline
35 & 34 & 28 & 33 & 30 & 32 & 30 & 31 & 30 & 30 & 31 & 29 & 33 & 30 & 30 & 30 & 29 & 28 &    &    &    &    &    &    &    &    &    &    &    &    &    &    &    &    &    &    &    &    & \\ \hline
36 & 36 & 28 & 36 & 30 & 36 & 33 & 36 & 32 & 36 & 32 & 36 & 29 & 36 & 31 & 36 & 33 & 36 &    &    &    &    &    &    &    &    &    &    &    &    &    &    &    &    &    &    &    &    & \\ \hline
37 & 36 & 29 & 35 & 32 & 34 & 32 & 33 & 33 & 32 & 32 & 32 & 35 & 30 & 33 & 34 & 33 & 30 & 29 &    &    &    &    &    &    &    &    &    &    &    &    &    &    &    &    &    &    &    & \\ \hline
38 & 38 & 30 & 38 & 32 & 38 & 34 & 38 & 33 & 38 & 35 & 38 & 31 & 38 & 33 & 38 & 34 & 38 & 36 &    &    &    &    &    &    &    &    &    &    &    &    &    &    &    &    &    &    &    & \\ \hline
39 & 38 & 31 & 37 & 33 & 36 & 34 & 35 & 36 & 34 & 33 & 35 & 36 & 32 & 36 & 35 & 33 & 33 & 32 & 31 &    &    &    &    &    &    &    &    &    &    &    &    &    &    &    &    &    &    & \\ \hline
40 & 40 & 32 & 40 & 35 & 40 & 35 & 40 & 34 & 40 & 38 & 40 & 35 & 40 & 33 & 40 & 35 & 40 & 37 & 40 &    &    &    &    &    &    &    &    &    &    &    &    &    &    &    &    &    &    & \\ \hline
41 & 40 & 32 & 39 & 35 & 38 & 37 & 37 & 38 & 36 & 35 & 35 & 37 & 34 & 39 & 35 & 36 & 37 & 36 & 34 & 32 &    &    &    &    &    &    &    &    &    &    &    &    &    &    &    &    &    & \\ \hline
42 & 42 & 33 & 42 & 36 & 42 & 36 & 42 & 36 & 42 & 39 & 42 & 37 & 42 & 34 & 42 & 37 & 42 & 38 & 42 & 40 &    &    &    &    &    &    &    &    &    &    &    &    &    &    &    &    &    & \\ \hline
43 & 42 & 34 & 41 & 37 & 40 & 39 & 39 & 39 & 38 & 37 & 37 & 38 & 37 & 41 & 35 & 39 & 40 & 38 & 38 & 35 & 34 &    &    &    &    &    &    &    &    &    &    &    &    &    &    &    &    & \\ \hline
44 & 44 & 35 & 44 & 37 & 44 & 38 & 44 & 39 & 44 & 40 & 44 & 40 & 44 & 36 & 44 & 38 & 44 & 39 & 44 & 41 & 44 &    &    &    &    &    &    &    &    &    &    &    &    &    &    &    &    & \\ \hline
45 & 44 & 36 & 43 & 39 & 42 & 40 & 41 & 40 & 40 & 40 & 39 & 39 & 41 & 42 & 37 & 42 & 40 & 39 & 40 & 39 & 37 & 36 &    &    &    &    &    &    &    &    &    &    &    &    &    &    &    & \\ \hline
46 & 46 & 36 & 46 & 39 & 46 & 41 & 46 & 42 & 46 & 41 & 46 & 43 & 46 & 40 & 46 & 38 & 46 & 40 & 46 & 42 & 46 & 44 &    &    &    &    &    &    &    &    &    &    &    &    &    &    &    & \\ \hline
47 & 46 & 37 & 45 & 40 & 44 & 41 & 43 & 41 & 42 & 43 & 41 & 40 & 42 & 43 & 39 & 45 & 41 & 42 & 44 & 43 & 42 & 39 & 37 &    &    &    &    &    &    &    &    &    &    &    &    &    &    & \\ \hline
48 & 48 & 38 & 48 & 42 & 48 & 44 & 48 & 45 & 48 & 42 & 48 & 46 & 48 & 42 & 48 & 39 & 48 & 44 & 48 & 43 & 48 & 45 & 48 &    &    &    &    &    &    &    &    &    &    &    &    &    &    & \\ \hline
49 & 48 & 39 & 47 & 42 & 46 & 43 & 45 & 43 & 44 & 46 & 44 & 42 & 42 & 44 & 42 & 47 & 40 & 45 & 45 & 42 & 43 & 42 & 40 & 39 &    &    &    &    &    &    &    &    &    &    &    &    &    & \\ \hline
50 & 50 & 40 & 50 & 43 & 50 & 45 & 50 & 46 & 50 & 43 & 50 & 47 & 50 & 45 & 50 & 41 & 50 & 43 & 50 & 44 & 50 & 46 & 50 & 48 &    &    &    &    &    &    &    &    &    &    &    &    &    & \\ \hline
51 & 50 & 40 & 49 & 44 & 48 & 45 & 47 & 45 & 46 & 48 & 45 & 44 & 44 & 45 & 46 & 48 & 42 & 48 & 45 & 45 & 47 & 47 & 45 & 42 & 40 &    &    &    &    &    &    &    &    &    &    &    &    & \\ \hline
52 & 52 & 41 & 52 & 44 & 52 & 46 & 52 & 47 & 52 & 45 & 52 & 48 & 52 & 48 & 52 & 45 & 52 & 43 & 52 & 46 & 52 & 47 & 52 & 49 & 52 &    &    &    &    &    &    &    &    &    &    &    &    & \\ \hline
53 & 52 & 42 & 51 & 46 & 50 & 48 & 49 & 48 & 48 & 49 & 47 & 47 & 46 & 46 & 49 & 49 & 44 & 51 & 46 & 48 & 50 & 47 & 46 & 47 & 44 & 42 &    &    &    &    &    &    &    &    &    &    &    & \\ \hline
54 & 54 & 43 & 54 & 46 & 54 & 47 & 54 & 48 & 54 & 48 & 54 & 49 & 54 & 51 & 54 & 48 & 54 & 44 & 54 & 49 & 54 & 48 & 54 & 50 & 54 & 52 &    &    &    &    &    &    &    &    &    &    &    & \\ \hline
55 & 54 & 44 & 53 & 47 & 52 & 50 & 51 & 51 & 50 & 50 & 49 & 50 & 48 & 47 & 49 & 50 & 48 & 53 & 45 & 51 & 50 & 48 & 50 & 50 & 48 & 45 & 44 &    &    &    &    &    &    &    &    &    &    & \\ \hline
56 & 56 & 44 & 56 & 49 & 56 & 49 & 56 & 49 & 56 & 51 & 56 & 50 & 56 & 54 & 56 & 50 & 56 & 46 & 56 & 49 & 56 & 49 & 56 & 51 & 56 & 53 & 56 &    &    &    &    &    &    &    &    &    &    & \\ \hline
57 & 56 & 45 & 55 & 49 & 54 & 51 & 53 & 53 & 52 & 51 & 51 & 53 & 51 & 49 & 49 & 51 & 51 & 54 & 47 & 54 & 50 & 51 & 54 & 52 & 51 & 51 & 47 & 45 &    &    &    &    &    &    &    &    &    & \\ \hline
58 & 58 & 46 & 58 & 50 & 58 & 52 & 58 & 51 & 58 & 54 & 58 & 51 & 58 & 55 & 58 & 53 & 58 & 50 & 58 & 48 & 58 & 53 & 58 & 52 & 58 & 54 & 58 & 56 &    &    &    &    &    &    &    &    &    & \\ \hline
59 & 58 & 47 & 57 & 51 & 56 & 52 & 55 & 54 & 54 & 52 & 53 & 56 & 54 & 51 & 51 & 53 & 55 & 55 & 49 & 57 & 51 & 54 & 55 & 51 & 53 & 53 & 51 & 49 & 47 &    &    &    &    &    &    &    &    & \\ \hline
60 & 60 & 48 & 60 & 51 & 60 & 55 & 60 & 54 & 60 & 57 & 60 & 52 & 60 & 56 & 60 & 56 & 60 & 55 & 60 & 49 & 60 & 54 & 60 & 53 & 60 & 55 & 60 & 57 & 60 &    &    &    &    &    &    &    &    & \\ \hline
61 & 60 & 48 & 59 & 53 & 58 & 54 & 57 & 55 & 56 & 54 & 55 & 58 & 54 & 54 & 53 & 53 & 56 & 56 & 53 & 59 & 50 & 57 & 55 & 54 & 57 & 57 & 56 & 54 & 50 & 48 &    &    &    &    &    &    &    & \\ \hline
62 & 62 & 49 & 62 & 53 & 62 & 56 & 62 & 57 & 62 & 58 & 62 & 54 & 62 & 57 & 62 & 59 & 62 & 55 & 62 & 51 & 62 & 54 & 62 & 55 & 62 & 56 & 62 & 58 & 62 & 60 &    &    &    &    &    &    &    & \\ \hline
63 & 62 & 50 & 61 & 54 & 60 & 56 & 59 & 56 & 58 & 56 & 57 & 59 & 56 & 57 & 55 & 54 & 56 & 57 & 56 & 60 & 52 & 60 & 57 & 57 & 60 & 56 & 56 & 56 & 56 & 52 & 50 &    &    &    &    &    &    & \\ \hline
64 & 64 & 51 & 64 & 56 & 64 & 57 & 64 & 60 & 64 & 59 & 64 & 57 & 64 & 58 & 64 & 62 & 64 & 58 & 64 & 56 & 64 & 53 & 64 & 60 & 64 & 57 & 64 & 59 & 64 & 61 & 64 &    &    &    &    &    &    & \\ \hline
65 & 64 & 52 & 63 & 56 & 62 & 59 & 61 & 58 & 60 & 59 & 59 & 60 & 58 & 60 & 58 & 56 & 56 & 58 & 60 & 61 & 54 & 63 & 56 & 60 & 60 & 57 & 60 & 61 & 60 & 57 & 54 & 52 &    &    &    &    &    & \\ \hline
66 & 66 & 52 & 66 & 57 & 66 & 58 & 66 & 61 & 66 & 60 & 66 & 60 & 66 & 59 & 66 & 63 & 66 & 61 & 66 & 60 & 66 & 54 & 66 & 59 & 66 & 58 & 66 & 60 & 66 & 62 & 66 & 64 &    &    &    &    &    & \\ \hline
67 & 66 & 53 & 65 & 58 & 64 & 61 & 63 & 60 & 62 & 62 & 61 & 61 & 60 & 63 & 62 & 58 & 58 & 60 & 63 & 62 & 58 & 65 & 55 & 63 & 60 & 60 & 64 & 61 & 59 & 59 & 60 & 55 & 53 &    &    &    &    & \\ \hline
68 & 68 & 54 & 68 & 58 & 68 & 60 & 68 & 62 & 68 & 61 & 68 & 63 & 68 & 60 & 68 & 64 & 68 & 64 & 68 & 61 & 68 & 56 & 68 & 59 & 68 & 62 & 68 & 61 & 68 & 63 & 68 & 65 & 68 &    &    &    &    & \\ \hline
69 & 68 & 55 & 67 & 60 & 66 & 62 & 65 & 63 & 64 & 65 & 63 & 62 & 62 & 66 & 63 & 62 & 60 & 60 & 63 & 63 & 61 & 66 & 57 & 66 & 63 & 63 & 65 & 60 & 63 & 64 & 63 & 60 & 57 & 55 &    &    &    & \\ \hline
70 & 70 & 56 & 70 & 60 & 70 & 63 & 70 & 63 & 70 & 62 & 70 & 66 & 70 & 61 & 70 & 65 & 70 & 67 & 70 & 63 & 70 & 61 & 70 & 58 & 70 & 65 & 70 & 62 & 70 & 64 & 70 & 66 & 70 & 68 &    &    &    & \\ \hline
71 & 70 & 56 & 69 & 61 & 68 & 63 & 67 & 66 & 66 & 67 & 65 & 63 & 65 & 68 & 63 & 64 & 62 & 61 & 63 & 64 & 65 & 67 & 59 & 69 & 62 & 66 & 65 & 63 & 67 & 66 & 64 & 64 & 63 & 59 & 56 &    &    & \\ \hline
72 & 72 & 57 & 72 & 63 & 72 & 66 & 72 & 64 & 72 & 64 & 72 & 69 & 72 & 63 & 72 & 66 & 72 & 70 & 72 & 66 & 72 & 65 & 72 & 59 & 72 & 64 & 72 & 64 & 72 & 66 & 72 & 67 & 72 & 69 & 72 &    &    & \\ \hline
73 & 72 & 58 & 71 & 63 & 70 & 65 & 69 & 68 & 68 & 68 & 67 & 65 & 66 & 69 & 65 & 67 & 65 & 63 & 63 & 67 & 69 & 68 & 63 & 71 & 60 & 69 & 65 & 66 & 70 & 65 & 66 & 67 & 66 & 65 & 60 & 58 &    & \\ \hline
74 & 74 & 59 & 74 & 64 & 74 & 67 & 74 & 66 & 74 & 67 & 74 & 70 & 74 & 66 & 74 & 67 & 74 & 71 & 74 & 69 & 74 & 68 & 74 & 61 & 74 & 64 & 74 & 69 & 74 & 66 & 74 & 68 & 74 & 70 & 74 & 72 &    & \\ \hline
75 & 74 & 60 & 73 & 65 & 72 & 67 & 71 & 69 & 70 & 69 & 69 & 67 & 68 & 70 & 67 & 70 & 69 & 65 & 65 & 67 & 70 & 69 & 68 & 72 & 62 & 72 & 68 & 69 & 70 & 66 & 70 & 71 & 69 & 69 & 66 & 62 & 60 & \\ \hline
76 & 76 & 60 & 76 & 65 & 76 & 68 & 76 & 69 & 76 & 70 & 76 & 71 & 76 & 69 & 76 & 68 & 76 & 72 & 76 & 72 & 76 & 68 & 76 & 66 & 76 & 63 & 76 & 70 & 76 & 67 & 76 & 69 & 76 & 71 & 76 & 73 & 76 & \\ \hline
77 & 76 & 61 & 75 & 67 & 74 & 70 & 73 & 70 & 72 & 70 & 71 & 70 & 70 & 71 & 69 & 73 & 72 & 69 & 67 & 67 & 70 & 70 & 70 & 73 & 64 & 75 & 67 & 72 & 70 & 69 & 74 & 70 & 69 & 70 & 69 & 69 & 64 & 61\\ \hline
\end{tabular}
}
\normalsize
\begin{center}
Table $1$: Independence number of $P(n,k), n \leq 77$.
\end{center}
\section*{Acknowledgement}
The authors would like to thank the referee for careful reading
of this paper and very helpful comments. And the authors like to
thank professor E. S. Mahmoodian for suggesting the problem and
very useful comments. We also thank  Nima Aghdaei, and Hadi
Moshaiedi for their computer program and algorithm to create
presented  table of $\alpha(P(n,k))$.
\def\cprime{$'$} \def\cprime{$'$} \def\cprime{$'$}

\end{document}